\newtheorem{theorem}{Theorem}[section]
\newtheorem{corollary}[theorem]{Corollary}
\newtheorem{prop}[]{Proposition}
\newtheorem{defn}[]{Definition}
\newtheorem{example}[theorem]{Example}
\newtheorem{remark}[]{Remark}
\newcommand{\nc}{\newcommand}
\nc{\A}{\mathcal A}\nc{\FH}{\mathcal H} \nc{\CC}{\mathbb C}
\nc{\JJ}{\mathcal J} \nc{\KK}{\mathbb K} \nc{\RR}{\mathbb R}
\nc{\LL}{\mathcal L} \nc{\Ll}{\ell} \nc{\NN}{\mathbb N}
\nc{\ZZ}{\mathbb Z} \nc {\HH}{\mathbb H} \nc {\OO}{\mathcal{O}}
\nc{\lra}{\longrightarrow} \nc{\bdot}{\bullet} \nc{\w}{\omega}
\nc{\dd}{\mathcal{D}}
\nc{\id}{\mathrm{id}}
\newcommand{\End}{\mathrm{End}}
\newcommand{\Ann}{\mathrm{Ann}}
\newcommand{\delbar}{\overline{\partial}}
\newcommand{\so}{\mathfrak{so}}
\newcommand{\IP}[1]{\langle #1\rangle}
\nc{\ba}{\overline} \nc{\del}{\partial} \nc{\AAA}{\mathcal{A}}
\nc{\T}{\tilde}\nc{\Hom}{\mathrm{Hom}}\nc{\ch}{\mathrm{ch}}\nc{\Td}{\mathrm{Td}}
\definecolor{tocolor}{rgb}{.1,.1,.5}
\definecolor{urlcolor}{rgb}{.2,.2,.6}
\definecolor{linkcolor}{rgb}{.1,.1,.6}
\definecolor{citecolor}{rgb}{.6,.2,.1}
\begin{document}
\title{\bf Branes on Poisson varieties}
\date{}
\author{ Marco Gualtieri}
\maketitle

\centerline{\it Dedicated to Nigel Hitchin on the occasion of his sixtieth birthday. }

\abstract{We first extend the notion of connection in the context of
Courant algebroids to obtain a new characterization of generalized
K\"ahler geometry.  We then establish a new notion of isomorphism
between holomorphic Poisson manifolds, which is non-holomorphic in
nature. Finally we show an equivalence between certain
configurations of branes on Poisson varieties and generalized
K\"ahler structures, and use this to construct explicitly new
families of generalized K\"ahler structures on compact holomorphic
Poisson manifolds equipped with positive Poisson line bundles (e.g.
Fano manifolds).  We end with some speculations concerning the
connection to non-commutative algebraic geometry.}

\section{Introduction}
In this paper we shall take a second look at a classical structure
in differential and algebraic geometry, that of a holomorphic
Poisson structure, which is a complex manifold with a holomorphic
Poisson bracket on its sheaf of regular functions.  The structure is
determined, on a real smooth manifold $M$, by the choice of a pair
$(I,\sigma_I)$, where $I$ is an integrable complex structure tensor
and $\sigma_I$ is a holomorphic Poisson tensor.  We shall view
$(I,\sigma_I)$ not as we normally do but instead as a
\emph{generalized complex structure}, in the sense of
\cite{MR2013140}. In so doing, we shall obtain a new notion of
equivalence between the pairs $(I,\sigma_I)$ which does not imply
the holomorphic equivalence of the underlying complex structures.

In studying this equivalence relation we are naturally led to an
unexpected connection to \emph{generalized K\"ahler geometry}, as
defined in~\cite{Gualtieri:rp}, and to a method for constructing certain
examples of these structures which extends the recent work of
Hitchin constructing bi-Hermitian metrics on Del Pezzo
surfaces~\cite{MR2371181}; in particular we obtain similar families of
bi-Hermitian metrics on all smooth Poisson Fano varieties, and in
fact on any smooth Poisson variety admitting a positive Poisson line
bundle.  We therefore give an explicit construction of a subclass of
the generalized K\"ahler structures proven to exist by the
generalized K\"ahler stability theorem of Goto~\cite{Goto:2007if}.

In both these efforts we shall find it useful to introduce an
extension of the notion of connection on a vector bundle, to allow
differentiation not only in the tangent but also the cotangent
directions; we call such a structure a \emph{generalized
connection}.  We also show that in the presence of a generalized
metric, there is a canonical connection $D$ which plays the role of
the Levi-Civita connection in K\"ahler geometry: namely, we show
that $(\JJ,G)$ is generalized K\"ahler if and only if $D\JJ=0$.

In the final section we make some speculative comments concerning
the relationship between generalized K\"ahler geometry and
non-commutative geometry, a topic we hope to clarify in the future.

I would like to thank Nigel Hitchin for many illuminating
discussions, and also the organizers of his birthday conference,
especially Luis \'Alvarez-C\'onsul and Oscar Garc\'ia-Prada, for a
stimulating event. I would also like to thank Mike Artin, Gil
Cavalcanti, Izzet Coskun and Pavel Etingof for helpful discussions.  Finally, I thank 
Yicao Wang for pointing out an incorrect definition of the generalized Bismut connection in a previous draft and for correcting it. 
\section{Gerbe trivializations}

Let $M$ be a manifold equipped with a $U(1)$ gerbe with connection
(specifically, a gerbe with connective structure in the sense
of~\cite{MR1197353}). This determines canonically a \emph{Courant
algebroid} $E$ over $M$, in the same way that a $U(1)$ principal
bundle $P$ determines an Atiyah Lie algebroid $E = TP/U(1)$ over
$M$. See~\cite{sevwein,MR2253158} for details of this construction,
and see~\cite{MR998124,MR1958835,MR1472888} for details concerning Courant
algebroids; we review their main properties presently.

The Courant algebroid $E$ is an extension of real vector bundles
\begin{equation}\label{ca}
\xymatrix{0\ar[r]& T^*\ar[r]^{\pi^*}& E\ar[r]^\pi& T\ar[r]&0},
\end{equation}
where $T$ and $T^*$ denote the tangent and cotangent bundles of $M$.
Further, $E$ is equipped with a nondegenerate symmetric bilinear
form $\IP{\cdot,\cdot}$ of split signature, such that
$\IP{\pi^*\xi,a} = \xi(\pi(a))$.  Finally, there is a bilinear
\emph{Courant bracket} $[\cdot,\cdot]$ on $C^\infty(E)$ such that
\begin{itemize}
\item $[[a,b],c] = [a,[b,c]] - [b,[a,c]]$  (Jacobi identity),
\item $[a,fb] = f[a,b] + (\pi(a) f)b$ (Leibniz rule),
\item $\pi(a)\IP{b,c} = \IP{[a,b],c} + \IP{b,[a,c]}$ (Invariance of
bilinear form),
\item $[a,a] = \pi^* d\IP{a,a}$ (Skew-symmetry anomaly).
\end{itemize}

The choice of an isotropic complement to $T^*$ in $E$ is a
contractible one, and so an isotropic splitting $s:T\lra E$ of the
sequence~\eqref{ca} always exists.  Each such splitting determines a
closed 3-form $H\in \Omega^3(M)$, given by
\begin{equation}\label{ache}
(i_Yi_XH)(Z) = \IP{[s(X),s(Y)],s(Z)}.
\end{equation}
The cohomology class $[H]/{2\pi}\in H^3(M,\RR)$ is independent of
the choice of splitting, and  coincides with the image of the
Dixmier-Douady class of the gerbe in real cohomology.  Furthermore,
$[H]$ classifies the Courant algebroid up to isomorphism, as shown
by~\cite{sevwein}.

Courant algebroids may be naturally pulled back by the inclusion
$S\subset M$ of a submanifold; as a bundle over $S$, the result is
simply given by
\begin{equation}
E_S = \frac{\pi^{-1}(TS)}{\Ann(TS)},
\end{equation}
and its bracket and inner product are inherited in a straightforward
manner.

A trivialization of the gerbe along $S$ induces a Courant
trivialization in the following sense:
\begin{defn}
A \emph{Courant trivialization} along $S$ is an integrable isotropic
splitting $s: TS\lra E_S$ of the pullback Courant algebroid.  Such
trivializations exist if and only if $\iota^*[H]=0$ for
$\iota:S\hookrightarrow M$ the inclusion.
\end{defn}
Integrability is the requirement that the subbundle $s(TS)\subset
E_S$ be closed under the Courant bracket. Integrable maximal
isotropic subbundles of a Courant algebroid are called Dirac
structures; therefore $s(TS)$ is simply a Dirac structure transverse
to $T^*S$.  As a result of a Courant trivialization, $E_S$ is
canonically isomorphic to $TS\oplus T^*S$ with its natural pairing
and the bracket
\[
[X+\xi,Y+\eta] = L_X(Y+\eta) - i_Yd\xi.
\]
Now suppose that $S_0, S_1\subset M$ are submanifolds with smooth
intersection, and suppose we have gerbe trivializations on each of
them.  Then on $X=S_0\cap S_1$ we obtain a pair of gerbe
trivializations, which must differ by a line bundle $L_{01}$ with
$U(1)$ connection $\nabla_{01}$.  Let $s_0, s_1$ be the splittings
of $E_X$ determined by the two gerbe trivializations. Then
$s_1-s_0:TX\lra T^*X$ is given by $X\mapsto i_X F_{01}$ where
$F_{01}\in \Omega^2(M)$ is the curvature of $\nabla_{01}$.

The notion of Courant trivialization provides a convenient way of
characterizing isomorphisms of Courant algebroids, as in the
following example.  The notation $\overline{E}$ denotes the Courant
algebroid $E$, equipped with the opposite bilinear form
$-\IP{\cdot,\cdot}$.
\begin{example}\label{couriso}
Let $E_M, E_N$ be Courant algebroids over the manifolds $M,N$
respectively.  They are isomorphic precisely if there is a Courant
trivialization of the product Courant algebroid
$\overline{E}_M\times E_N$ along the graph of a diffeomorphism
$\varphi:M\lra N$ in the product $M\times N$.
\end{example}
\section{Generalized connections}

Let $E$ be a Courant algebroid as in the previous section.  In
keeping with the notion that the Courant algebroid is an analogue of
the tangent bundle, we have the following generalization of the
usual notion of connection.
\begin{defn}
A \emph{generalized connection} on a vector bundle $V$ is a
first-order linear differential operator
\[
D: C^\infty(V)\lra C^\infty(E\otimes V)
\]
such that $D(fs)=fDs +(\pi^*df)\otimes s$. Furthermore, if $V$ has a
Hermitian metric $h$, then $D$ is \emph{unitary} when
\[
d (h(s,t)) = h(Ds,t) + h(s,Dt).
\]
\end{defn}
If $s:T\lra E$ is any splitting (not necessarily isotropic) of the
Courant algebroid, then using the decomposition $E = s(T)\oplus T^*
\cong T\oplus T^*$, we have
\begin{equation}\label{decomp}
D = \nabla + \chi,
\end{equation}
where $\nabla$ is a usual unitary connection and  $\chi$ is a vector
field with values in the bundle of skew-adjoint endomorphisms of
$V$, i.e. $\chi\in C^\infty(T\otimes \mathfrak{u} ( V))$.  The
tensor $\chi$ is independent of the choice of splitting, and we note
that if $V$ is of rank 1, $\chi$ is simply a vector field on the
manifold.

With respect to a different splitting $s'$, such that
\[
s'-s = \theta: T\lra T^*,
\]
we obtain a different decomposition $D = \nabla' + \chi$, where
$\nabla' = \nabla + \theta(\chi)$.

A generalized connection has a natural curvature operator: for
$a,b\in C^\infty(E)$, we define
\[
R(a,b) = [D_a,D_b] - D_{[a,b]} \in C^\infty(\mathfrak{u}(V)).
\]
This becomes tensorial in $a,b$ when restricted to a Dirac structure
$L\subset E$:
\[
R|_L \in C^\infty(\wedge^2 L^*\otimes\mathfrak{u}(V)).
\]
If $L=T^*$, for example, we obtain a bivector with values in the
skew-adjoint endomorphisms,  $R|_{T^*} = [\chi,\chi]$.

The tensorial curvatures $R_{s'},R_{s}$ associated to integrable
splittings $s,s'$ of $E$ with $s'-s = F\in\Omega^2_{cl}(M)$ may be
compared by projection to $T$:
\[
R_{s'}= R_s + d^\nabla(a) + a\wedge a,
\]
where $a = F(\chi)$. Therefore if $V$ has rank 1, we have that $\chi
= iX$ for a real vector field $X$, and
\[
R_{s'} - R_s = d i_X F.
\]

In the particular case that we have a generalized connection $D$ on
$E$ itself, it is natural to compare the connection derivative with
the Courant bracket; we therefore introduce the \emph{torsion} of
$D$, and leave it as an exercise to verify it is well-defined.
\begin{defn}
The torsion $T_D\in C^\infty(\wedge^2 E\otimes E)$ of a generalized
connection $D$ on $E$ itself is defined by
\[
T(a,b,c)=\IP{D_ab-D_ba-[a,b]_{sk},c} +
\tfrac{1}{2}(\IP{D_ca,b}-\IP{D_cb,a}),
\]
where $[a,b]_{sk}=\tfrac{1}{2}([a,b]-[b,a])$. If $D$ preserves the
canonical bilinear form $\IP{\cdot,\cdot}$ on $E$, then $T_D$ is
totally skew, i.e. $T_D\in C^\infty(\wedge^3 E)$.
\end{defn}

A generalized Riemannian metric on the Courant algebroid $E$ is the
choice of a maximal positive-definite subbundle $C_+\subset E$; this
reduces the $O(n,n)$ structure of $E$ to $O(n)\times O(n)$, and
defines a positive-definite metric on $E$:
\[
 G(\cdot,\cdot) = \IP{\cdot,\cdot}|_{C_+} - \IP{\cdot,\cdot}|_{C_-},
\]
where $C_- = C_+^\bot$ is the orthogonal complement with respect to
$\IP{\cdot,\cdot}$.
We now describe a construction of a canonical connection associated
to the choice of such a metric, inspired by calculations
in~\cite{Gualtieri:rp,MR2253158} relating metric connections with skew torsion
to the Courant bracket.

The $G$-orthogonal complement to $T^*$ is an isotropic splitting
$C_0\subset E$ and we identify it with $T$, so that $G$ induces a
splitting $E=T\oplus T^*$. The Courant bracket in this splitting is
\[
 [X+\xi,Y+\eta]_H=[X,Y] + L_X\eta - i_Yd\xi + i_Yi_X H,
\]
where $H\in\Omega^3_{cl}(M)$ is defined by~\eqref{ache}.  The splitting also defines an anti-orthogonal automorphism $C:E\lra E$ defined by $C(X+\xi)=X-\xi$, which satisfies $C(C_\pm) = C_\mp$.  It also has the property, for $Z,W\in C^\infty(E)$:
\[
 [CZ,CW]_H = C([Z,W]_{-H}).
\]
\begin{theorem}\label{lclc}
Let $C_+\subset E$ be a maximal positive-definite subbundle, i.e. a
generalized metric, as above.  Let $C:E\lra E$ be the map defined above.  Write $Z=Z_++Z_-$ for the orthogonal projections of $Z\in C^\infty(E)$ to $C_\pm$. Then the operator
\begin{equation}\label{lc}
D_Z(W) = [Z_-, W_+]_+ + [Z_+, W_-]_- +[CZ_-, W_-]_- + [CZ_+,W_+ ]_+
\end{equation}
defines a generalized connection on $E$, preserving both
$\IP{\cdot,\cdot}$ and the positive-definite metric $G$.   We call this the generalized Bismut connection.
\end{theorem}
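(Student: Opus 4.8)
The plan is to check in turn the four claims packaged into the statement: that formula~\eqref{lc} is $C^\infty$-linear (tensorial) in $Z$, that it satisfies the derivation rule in $W$, and that it preserves $\IP{\cdot,\cdot}$ and $G$. Tensoriality in $Z$ is what guarantees, by nondegeneracy of $\IP{\cdot,\cdot}$, that $D_ZW$ is the contraction of an honest section $DW\in C^\infty(E\otimes E)$ with $Z$; the derivation rule in $W$ is then the defining property $D(fW)=fDW+(\pi^*df)\otimes W$ written in contracted form. Throughout I would lean on four structural facts: the orthogonality $C_+\perp C_-$, the swapping property $C(C_\pm)=C_\mp$, the identity $\pi\circ C=\pi$ (immediate since $C$ fixes $T$ and negates $T^*$), and the Courant bracket axioms of the previous section.

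First I would dispose of the derivation rule $D_Z(fW)=fD_Z(W)+(\pi(Z)f)W$. Since the projections to $C_\pm$ are $C^\infty$-linear, applying the Leibniz axiom $[a,fb]=f[a,b]+(\pi(a)f)b$ to each of the four brackets produces, beyond the $f$-linear part, the extra terms $(\pi(Z_-)f)W_++(\pi(Z_+)f)W_-+(\pi(CZ_-)f)W_-+(\pi(CZ_+)f)W_+$; using $\pi C=\pi$ this collapses to $(\pi(Z)f)W$, exactly as required. This step is routine.

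The hard part will be tensoriality in $Z$, that is $D_{fZ}(W)=fD_Z(W)$, and this is where the precise shape of~\eqref{lc} earns its keep: the Courant bracket is \emph{not} $C^\infty$-linear in its first argument. Polarizing the skew-symmetry anomaly gives $[b,a]=-[a,b]+2\pi^*d\IP{a,b}$, whence $[fa,b]=f[a,b]-(\pi(b)f)a+2\IP{a,b}\pi^*df$, and the two anomalous corrections are precisely what the formula is built to annihilate. In each of the four terms the relevant pairing, $\IP{Z_\mp,W_\pm}$ for the first two and $\IP{CZ_\pm,W_\pm}$ for the last two, vanishes by orthogonality together with $C(C_\pm)=C_\mp$, so the $\pi^*df$ correction drops out; and the surviving correction $-(\pi(W)f)(\cdot)$ carries a factor ($Z_\mp$, respectively $CZ_\pm$) lying in the subbundle opposite to the projection then applied, hence is killed by that projection. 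Every anomaly therefore cancels and $D_{fZ}(W)=fD_Z(W)$.

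Finally, compatibility. The decisive feature is that~\eqref{lc} is block-diagonal: the $C_+$-component of $D_ZW$ involves only $W_+$ and the $C_-$-component only $W_-$, so $D_Z\bigl(C^\infty(C_\pm)\bigr)\subset C^\infty(C_\pm)$. Because both $\IP{\cdot,\cdot}$ and $G$ are block-diagonal for the splitting $E=C_+\oplus C_-$ and agree up to sign on each block, it suffices to verify compatibility on $C_+$ and on $C_-$ separately. Pairing $D_ZW_+$ with $W'_+\in C_+$ lets me drop the projections to $C_+$, and the invariance axiom $\pi(a)\IP{b,c}=\IP{[a,b],c}+\IP{b,[a,c]}$ applied to the $Z_-$ and $CZ_+$ brackets gives $\pi(Z_-)\IP{W_+,W'_+}+\pi(CZ_+)\IP{W_+,W'_+}=\pi(Z)\IP{W_+,W'_+}$, again using $\pi C=\pi$. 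The computation on $C_-$ is identical, so $D$ preserves $\IP{\cdot,\cdot}$, and with block-diagonality it preserves $G$ as well, completing the argument.
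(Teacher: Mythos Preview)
Your proof is correct and follows essentially the same route as the paper: both verify the Leibniz rule in $W$ via $\pi C=\pi$, the tensoriality in $Z$ via the orthogonality $C_+\perp C_-$ together with $C(C_\pm)=C_\mp$, and then deduce compatibility with both metrics from block-diagonality plus the invariance axiom on each block. The only real difference is expository: you spell out the anomaly formula $[fa,b]=f[a,b]-(\pi(b)f)a+2\IP{a,b}\pi^*df$ and track the two correction terms explicitly, whereas the paper dispatches $D_{fZ}W=fD_ZW$ in one line.
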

\begin{proof}
Using the properties of the Courant bracket and the orthogonality
$C_+=C_-^\bot$, we have immediately the property $D_{fZ}W=fD_ZW$,
for $f\in C^\infty(M)$.  We also have
\begin{align*}
D_Z(fW) &= fD_Z(W) + (Z_-f)W_+ + (Z_+f) W_- + (Z_- f)W_- + (Z_+f)W_+\\
&=fD_Z(v) + (Zf) W,
\end{align*}
proving that $D$ is a generalized connection.

It is clear from~\eqref{lc} that $C_\pm$ are preserved by the connection, since $D_ZW$ has nonzero component in $C_\pm$ if and only if $W$ does. Hence we obtain a decomposition
\begin{equation}\label{pmdecomp}
D = D^+\oplus D^-,
\end{equation}
where $D^\pm$ are generalized connections on $C_\pm$ respectively.

To prove that $D$ preserves the canonical metric $\IP{\cdot,\cdot}$ as well as the metric $G$, we show that $D^\pm$ preserve the induced metrics on $C_\pm$.  Let $V,W\in C^\infty(C_+)$, and $Z\in C^\infty(E)$. Then
\begin{align*}
 Z_+\IP{V,W} &= (CZ_+)\IP{V,W}\\
&=\IP{[CZ_+,V],W]} +\IP{V,[CZ_+,W]}\\
&= \IP{D_{Z_+}V,W} + \IP{V,D_{Z_+}W}.
\end{align*}
Also, we have
\begin{align*}
 Z_-\IP{V,W} &= \IP{[Z_-,V],W} + \IP{V,[Z_-,W]}\\
&=\IP{D_{Z_-}V,W} + \IP{V,D_{Z_-}W}.
\end{align*}
Summing these two results, we see that $D^+$ preserves the metric on $C_+$; the same argument holds for $C_-$, completing the proof.
\end{proof}
The generalized connections $D^\pm$ in the decomposition~\eqref{pmdecomp} define tensors $\chi^\pm \in C^\infty(T\otimes \so(C_\pm))$, via the decomposition~\eqref{decomp}.  We see now that these vanish, since for $Z\in C^\infty(T^*)$ and $W\in C^\infty(C_\pm)$, we have
\begin{align*}
\chi^\pm_ZW &= D_ZW = [Z_\mp, W_\pm]_\pm + [CZ_\pm, W_\pm]_\pm\\
&= [Z_\mp + (CZ)_\mp, W_\pm]_\pm\\
&=0,
\end{align*}
where we use the fact that $Z\in T^*$ if and only if $CZ=-Z$. 

As a result of this, we conclude that $D^\pm$ may be viewed as usual metric connections $\nabla^\pm$ on $T$, via the projection isomorphisms $\pi_\pm:C_\pm\lra T$, i.e.
\[
D^\pm = \pi_\pm^{-1} \nabla^\pm \pi_\pm. 
\]
The connections $\nabla^\pm$ may be described as follows:
\begin{align*}
\nabla^\pm_X Y &= 2\pi_\pm D_X^\pm Y_\pm\\
&=4\pi_\pm D^\pm_{X_\mp} Y_\pm\\
&= 4\pi_\pm [X_\mp,Y_\pm]_\pm.
\end{align*}

We may easily compute the torsion $T^\pm$ of the connections $\nabla^\pm$, 
for vector fields $X,Y,Z$:
\begin{align*}
2g(T^+(X,Y),Z) &= \IP{T^+(X,Y), Z_+}\\
&=\IP{ 4\pi_+[X_-,Y_+]_+ - 4\pi_+[Y_-,X_+]_+ - \pi[X,Y]  , Z_+}\\
&=\IP{2[X_-,Y_+]_+ + 2[X_+,Y_-]_+ -[X,Y] + i_Yi_X H, Z_+}\\
&= 2H(X,Y,Z) - 2\IP{[X_+-X_-,Y_+-Y_-],Z_+}\\
&=2H(X,Y,Z),
\end{align*}
by the fact that $[X_+-X_-,Y_+-Y_-]=0$ since the Courant bracket vanishes on 1-forms.  A similar calculation gives $g(T^-(X,Y),Z) = -H(X,Y,Z)$. 

The above calculation shows that $\nabla^\pm$ coincide with the Bismut connections with totally skew torsion $\pm H$.  In this way, we have essentially repeated the observation  of~\cite{MR2253158} that
$\nabla^\pm$ may be conveniently expressed in terms of the Courant
bracket.  To summarize, the generalized Bismut connection is essentially a usual connection on $E$ which restricts to $C_\pm$ to give the Bismut connections with torsion $\pm H$.

\begin{prop}\label{torscalc}
The torsion $T_D$ of the generalized Bismut connection lies in $\wedge^3 C_+\oplus \wedge^3 C_-\subset \wedge^3 E$, and is given by 
\[
T_D = \pi_+^* H + \pi_-^* H.
\]
\end{prop}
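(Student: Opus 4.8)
The plan is to exploit the fact, recorded in the Definition, that a metric-preserving generalized connection has totally skew torsion; since $D$ preserves $\IP{\cdot,\cdot}$, the tensor $T_D$ is a genuine $3$-form on $E$, and so is determined by its values on triples of sections drawn from $C_+$ and $C_-$. I would organize everything around the decomposition $D=D^+\oplus D^-$ and two facts already available: that $D_ZW$ lies in $C_\pm$ whenever $W$ does, and that in the splitting $E=T\oplus T^*$ one has $C_\pm=\{X\pm gX\}$ with $\IP{X\pm gX,\,Y\pm gY}=\pm 2g(X,Y)$ and $\pi_\pm(X\pm gX)=X$. From the defining formula for $D$ one also reads off $D_ab=[Ca,b]_+$ for $a,b\in C^\infty(C_+)$, and the analogue on $C_-$, which is what makes the terms explicitly computable.

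First I would show that all mixed components vanish, giving $T_D\in\wedge^3 C_+\oplus\wedge^3 C_-$. By total skewness there are only two cases. For $a,b\in C^\infty(C_+)$ and $c\in C^\infty(C_-)$, the terms $\IP{D_ab-D_ba,c}$ drop out because $D_ab,D_ba\in C_+$ are orthogonal to $c$; the surviving pieces are treated with the invariance of the bilinear form together with the skew-symmetry anomaly $[x,y]+[y,x]=\pi^* d\IP{x,y}$, which, since $\IP{a,c}=\IP{b,c}=0$, forces $[a,c]=-[c,a]$ and $[b,c]=-[c,b]$; the contributions then cancel in pairs. The remaining case $a\in C^\infty(C_+)$, $b,c\in C^\infty(C_-)$ is similar and collapses to $\tfrac12\IP{\pi^* d\IP{a,b},c}$, which vanishes because $\IP{a,b}=0$ identically.

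Next I would compute the pure component on $C_+$. Writing $a=X+gX$, $b=Y+gY$, $c=Z+gZ$ and using $D_ab=[Ca,b]_+$ together with the Courant bracket $[X+\xi,Y+\eta]_H$, I would expand the three groups of terms in the torsion formula. The term $\IP{D_ab-D_ba,c}$ yields $2g([X,Y],Z)+2H(X,Y,Z)$ — consistent with $D^+=\pi_+^{-1}\nabla^+\pi_+$ and $\nabla^+$ being the Bismut connection of skew torsion $+H$ — while $\IP{[a,b]_{sk},c}$ and $\tfrac12(\IP{D_ca,b}-\IP{D_cb,a})$ each contribute one copy of the $Xg(Y,Z)$-type and $g(\cdot,[\cdot,\cdot])$-type data of $g$ and one copy of $H$. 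The main labor, and the step I expect to be the genuine obstacle, is the bookkeeping needed to verify that on assembling the three groups every term built from $g$ and its derivatives cancels identically, leaving only the $H$-contribution; this cancellation is exactly where total skewness of the Bismut torsion is used. The result is $T_D|_{\wedge^3 C_+}=\pi_+^* H$.

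Finally I would rerun this computation on $C_-$. The only structural changes are that the induced pairing $\IP{\cdot,\cdot}|_{C_-}=-G|_{C_-}$ carries the opposite sign and that $\nabla^-$ has skew torsion $-H$ in place of $+H$. These two sign reversals compensate precisely, so that the $H$-contribution reappears with a \emph{positive} sign while the non-$H$ terms again cancel, giving $T_D|_{\wedge^3 C_-}=\pi_-^* H$. Confirming that these two sign flips really cancel rather than reinforce is the delicate point that makes the final answer $\pi_+^* H+\pi_-^* H$ and not $\pi_+^* H-\pi_-^* H$; once it is settled, summing the two pure components completes the proof.
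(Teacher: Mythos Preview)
Your proposal is correct and reaches the same conclusion, but the execution differs from the paper's in two places worth noting.

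For the vanishing of the mixed components, the paper takes $x\in C^\infty(C_+)$, $y\in C^\infty(C_-)$, $z\in C^\infty(E)$ and computes directly: since $D_xy=[x,y]_-$ and $D_yx=[y,x]_+$ from the defining formula, and $[x,y]_{sk}=[x,y]$ because $\IP{x,y}=0$, one gets $D_xy-D_yx-[x,y]_{sk}=[x,y]_-+[x,y]_+-[x,y]=0$; the remaining terms $\tfrac12(\IP{D_zx,y}-\IP{D_zy,x})$ vanish immediately because $D_zx\in C_+$, $D_zy\in C_-$. This single computation handles $T(C_+,C_-,\cdot)$ and hence, by skewness, both mixed types at once. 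Your two-case argument via invariance and the skew-symmetry anomaly works too, but is longer.

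For the pure $C_+$ component, the paper avoids your brute-force expansion entirely. It uses the already-established fact $\chi_D=0$ to obtain the identity
\[
\IP{D_zx,y}-\IP{D_zy,x}=\IP{[x,y]-[y,x],Cz},
\]
which converts the awkward correction term in the torsion formula into $\IP{[x,y],Cz-z}$; since $Cz-z\in T^*$, this is just $-2g([X,Y],Z)$, and the whole expression collapses to the Bismut torsion $g(\nabla^+_XY-\nabla^+_YX-[X,Y],Z)$ computed earlier. Your direct expansion in coordinates $a=X+gX$, etc., certainly produces the same cancellations, but the paper's route explains \emph{why} they occur: the extra terms in the generalized torsion are precisely engineered (via $C$) to reduce to the ordinary torsion of $\nabla^\pm$. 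The sign analysis on $C_-$ that you flag as delicate is likewise automatic in the paper's argument, since the same reduction gives the torsion of $\nabla^-$ against the pairing $-2g$, and both signs flip together.
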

\begin{proof}
First we show that $T(C_+,C_-,\cdot)=0$, so that $T\in
C^\infty(\wedge^3 C_+\oplus \wedge^3 C_-)$.  Let $x\in
C^\infty(C_+)$, $y\in C^\infty(C_-)$ and $z\in C^\infty(E)$. Then
\begin{align*}
 T_D(x,y,z) &= \IP{D_xy-D_yx -[x,y],z}\\
 &=\IP{[x,y]_- -[y,x]_+ - [x,y],z}=0,
\end{align*}
as required. 

Now take $x,y,z\in C^\infty(C_+)$.  Since $\chi_D=0$, we have the identity
\begin{align*}
\IP{D_z x, y} -\IP{D_z y,x} &=  \IP{D_{Cz}x,y}-\IP{D_{Cz}y,x}\\
&=\IP{[Cz,x],y}-\IP{[Cz,y],x}\\
&=\IP{[x,y]-[y,x],Cz}.
\end{align*}
Therefore the torsion is given by
\begin{align*}
T(x,y,z)&=\IP{D_xy-D_yx-[x,y]_{sk},z} + \tfrac{1}{2}\IP{[x,y]-[y,x],Cz}\\
&=\IP{D_xy-D_yx,z} +\IP{[x,y], Cz-z}\\
&= g(\nabla^+_XY-\nabla^+_YX-[X,Y],Z)\\
&=H(X,Y,Z).
\end{align*}
A similar calculation for $x,y,z\in C^\infty(C_-)$ gives $T(x,y,z)=H(x,y,z)$ as well, yielding the result.
\end{proof}

As we have explained, the generalized Bismut connection $D$ is completely determined by a usual connection on $T\oplus T^*$.  Using the decomposition~\eqref{pmdecomp}, and the fact that the Bismut connections satisfy $\nabla^\pm = \nabla \pm \tfrac{1}{2}g^{-1}H$ for $\nabla$ the Levi-Civita connection, we may write $D$ explicitly with respect to the splitting $E = T\oplus T^*$, and for $X\in C^\infty(T)$,  as follows:
\[
D_X = \begin{pmatrix}\nabla_X & \tfrac{1}{2}\wedge^2 g^{-1} (i_X H)\\\tfrac{1}{2}i_XH & \nabla^*_X\end{pmatrix}
\]
The significance of this connection in the context of generalized geometry was first understood and investigated by Ellwood in~\cite{Ellwood:2006ya}.  Here we simply view it as a generalized connection\footnote{The author thanks Yicao Wang for pointing this out and correcting an error in the previous version.} mainly for the purpose of highlighting its properties and defining its torsion tensor.    

\section{Generalized holomorphic bundles and branes}

Suppose now that we have a generalized complex structure $\JJ$ on
$(M,E)$, which is an orthogonal almost complex structure $\JJ:E\lra
E$ whose $+i$ eigenbundle $L\subset E\otimes\CC$ is closed under the
Courant bracket~\cite{MR2013140}.  We now describe how the structures
in the previous two sections may be made \emph{compatible} with
$\JJ$.

\subsection{Generalized holomorphic bundles}
The integrability of $\JJ$ guarantees that $L=\ker(\JJ-i1)$ is a
complex Lie algebroid, with associated de Rham complex
\begin{equation}
\xymatrix{C^\infty(\wedge^k
L^*)\ar[r]^{d_L}&C^\infty(\wedge^{k+1}L^*)}
\end{equation}
A complex vector bundle equipped with a flat $L$-connection is
called a generalized holomorphic bundle~\cite{Gualtieri:qr}.  Therefore,
generalized holomorphic bundles form a category of Lie algebroid
representations in the sense of~\cite{MR1726784}.

In the case that $\JJ$ is a usual complex structure, for instance, a
generalized holomorphic bundle consists of a holomorphic bundle $V$,
together with a holomorphic section $\Phi\in H^0(M, T_{1,0}M\otimes
\End(V))$ satisfying
\[
\Phi\wedge\Phi = 0 \in H^0(M, \wedge^2T_{1,0}M\otimes\End(V).
\]
Note that if $M$ is holomorphic symplectic, then $T_{1,0}$ is
isomorphic to $T^*_{1,0}$, and $\Phi$ may be viewed as a Higgs
bundle, in the sense of~\cite{Simpson}.

In the case that $\JJ$ is a symplectic structure, a generalized
holomorphic bundle is simply a flat bundle.

\begin{defn}
A unitary generalized connection $D$ on a complex vector bundle $V$
is compatible with $\JJ$ when its curvature along $L = \ker
(\JJ-i1)$ is zero.
\end{defn}
It follows immediately that the restriction of $D$ to $L$ defines a
flat $L$-module structure on $V$, making $V$ a generalized
holomorphic bundle. Conversely, suppose $V$ is a $\JJ$-holomorphic
bundle, i.e. it is equipped with an $L$-connection as follows:
\begin{equation}\label{lmod}
\delbar: C^\infty(V)\lra C^\infty(L^*\otimes V),\ \ \ \delbar^2 = 0.
\end{equation}
This operator has symbol sequence given by wedging with $\sigma(\xi)
= \tfrac{1}{2}(1+i\JJ)\xi\in \overline L$, where we identify
$L^*=\overline L$ using the metric on $E$.

Choosing a Hermitian metric $h$ on the bundle $V$, so that
$\overline V\simeq V^*$, we may view the complex conjugate
of~\eqref{lmod},
\[
\del: C^\infty(\overline V)\lra C^\infty(L\otimes \overline V),
\]
as a $L$-connection on $V^*$; we then form the dual $\del^*$ of this
partial connection. Finally we form the sum
\[
D = \delbar + \del^*: C^\infty(V)\lra C^\infty((\overline L\oplus
L)\otimes V) = C^\infty(E\otimes V),
\]
which has symbol $\sigma + \bar\sigma = \pi^*$.  Hence it defines a
generalized connection on $V$. We summarize the above in the
following
\begin{prop}\label{herm}
Let $V$ be a complex vector bundle with $\JJ$-holomorphic structure
given by $\delbar$, and choose a Hermitian metric on $V$.  Then the
operator
\[
D = \delbar + \del^* : C^\infty(V)\lra C^\infty(E\otimes V)
\]
is the unique unitary generalized connection extending $\delbar$.
\end{prop}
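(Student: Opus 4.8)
The plan is to separate the two assertions---that $D=\delbar+\del^*$ is a unitary generalized connection, and that it is the only such operator restricting to $\delbar$---after fixing the bookkeeping forced by the decomposition $E\otimes\CC=L\oplus\overline L$ and the metric identification $L^*\cong\overline L$.

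First I would confirm that $D$ is a generalized connection via its symbol, as already recorded above: $\delbar$ has symbol $\sigma=\tfrac12(1+i\JJ)$ (projection onto $\overline L$), and $\del^*$, built from the conjugate $\del$ of $\delbar$ and differentiating along $\overline L$, has symbol $\bar\sigma=\tfrac12(1-i\JJ)$ (projection onto $L$); their sum is $\pi^*$, so $D(fs)=fDs+(\pi^*df)\otimes s$. This step is routine.

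The substance is unitarity, $\pi^* d\,h(s,t)=h(Ds,t)+h(s,Dt)$ in $C^\infty(E\otimes\CC)$, which I would verify by projecting onto the summands $L$ and $\overline L$. The key point is that conjugate-linearity of $h$ in its second argument interchanges the two directions: writing $Ds=\delbar s+\del^* s$ with $\delbar s\in\overline L\otimes V$ and $\del^* s\in L\otimes V$, the pairing $h(Ds,t)$ places $h(\delbar s,t)$ in the $\overline L$-summand and $h(\del^* s,t)$ in the $L$-summand, whereas $h(s,Dt)$ places $h(s,\del^* t)$ in $\overline L$ and $h(s,\delbar t)$ in $L$. The $L$-component of the unitarity identity thus reads $h(\del^* s,t)+h(s,\delbar t)=(\pi^* d\,h(s,t))_L$, which is precisely the relation defining $\del^*$ as the metric-dual (through $\overline V\cong V^*$) of the conjugate partial connection $\del$, and so holds by construction. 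The $\overline L$-component is then free: applying the $L$-identity to the swapped pair $(t,s)$ and conjugating, and using $h(t,s)=\overline{h(s,t)}$ together with the fact that conjugation exchanges the $L$- and $\overline L$-projections, reproduces it. I expect keeping these conjugations straight---tracking which slot of $h$ sends a direction in $L$ to one in $\overline L$---to be the main obstacle, conceptual rather than computational.

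For uniqueness, let $D'$ be another unitary generalized connection with $\overline L$-part equal to $\delbar$. Since $D$ and $D'$ share the symbol $\pi^*$, the difference $A:=D-D'$ is $C^\infty(M)$-linear, hence a section of $E\otimes\CC\otimes\End V$, and subtracting the two unitarity identities gives $h(As,t)+h(s,At)=0$. As both connections restrict to $\delbar$, the $\overline L$-component of $A$ vanishes, so $A\in C^\infty(L\otimes\End V)$. But then $h(As,t)\in L$ while $h(s,At)\in\overline L$, and since $L\cap\overline L=0$ the skew relation forces $h(As,t)=0$ for all $s,t$; nondegeneracy of $h$ yields $A=0$, so $D=D'$.
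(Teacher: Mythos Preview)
Your proposal is correct and follows the same construction the paper sketches; in fact the paper offers no proof beyond the symbol computation $\sigma+\bar\sigma=\pi^*$ and the phrase ``We summarize the above in the following,'' so your explicit verification of unitarity (via the $L$/$\overline L$ decomposition and the defining relation of $\del^*$) and uniqueness (via $A\in L\otimes\End V$ forcing $h(As,t)\in L\cap\overline L=0$) supplies precisely the details the paper leaves implicit.
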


When $V$ is a line bundle, there is a useful formula for the
generalized connection 1-form in terms of a holomorphic
trivialization, analogous to the Poincar\'{e}-Lelong formula for the
Chern connection on a Hermitian holomorphic line bundle.
\begin{prop}[Generalized Poincar\'{e}-Lelong formula]\label{PL}
Let $V$ be a generalized holomorphic hermitian line bundle, and let
$s\in C^\infty(V)$ be a holomorphic section.  Where it is nonzero,
it defines a trivialization of the unitary generalized connection,
$D = d + i\A$, where
\[
\A = \JJ d\log |s| \in C^\infty(E).
\]
\end{prop}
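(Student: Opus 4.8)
The plan is to reduce the whole statement to the computation of $Ds$ on the open set where $s\neq 0$: there $s$ is a local frame, every section is of the form $fs$, and the Leibniz rule forces $D=d+i\A$ with the generalized connection one-form read off from $Ds=(i\A)\otimes s$. By Proposition~\ref{herm} the relevant unitary generalized connection is $D=\delbar+\del^*$, with $\delbar$ valued in $\overline L\otimes V$ and $\del^*$ valued in $L\otimes V$. First I would exploit the hypothesis that $s$ is holomorphic, i.e.\ $\delbar s=0$: this kills the $\overline L$-component of $Ds$, so that $Ds=\del^* s\in C^\infty(L\otimes V)$, and we may write $Ds=\alpha\otimes s$ with $\alpha\in C^\infty(L)$.

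The second step is to determine $\alpha$ using unitarity alone, which avoids any explicit computation of the adjoint $\del^*$. Evaluating the unitarity identity on the pair $(s,s)$ gives
\[
d\,h(s,s)=h(Ds,s)+h(s,Ds)=(\alpha+\bar\alpha)\,h(s,s),
\]
so that $\alpha+\bar\alpha=d\log h(s,s)=d\log|s|^2$. Now $d\log|s|^2$ is a real section of $T^*\subset E$, and the displayed equation is exactly its decomposition under $E\otimes\CC=L\oplus\overline L$; hence $\alpha$ is the $L$-part of $d\log|s|^2$. Since the projection onto $L=\ker(\JJ-i1)$ is $\tfrac12(1-i\JJ)$, this yields
\[
\alpha=\tfrac12(1-i\JJ)\,d\log|s|^2=d\log|s|-i\JJ\,d\log|s|.
\]
Passing to the unit frame $e=s/|s|$ subtracts the real exact term $d\log|s|$ and leaves the purely imaginary one-form $-i\JJ\,d\log|s|=i\A$, recovering $\A=\JJ\,d\log|s|$ up to the sign fixed by the conventions for $\JJ$; since $\A$ is real, $D=d+i\A$ is manifestly unitary, as it must be.

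The main obstacle is the first step, namely verifying cleanly that $Ds$ has no $\overline L$-component, i.e.\ that $\del^* s$ genuinely lands in $L\otimes V$. This rests on the identification $L^*\cong\overline L$ via the metric and on the placement of $\del^*$ opposite to $\delbar$ in $E\otimes\CC=L\oplus\overline L$, both of which I would state carefully before using them. After that, unitarity leaves no freedom in $\alpha$, so the only remaining care is bookkeeping: tracking the factor relating $\log|s|$ and $\log|s|^2$ and the sign of $\JJ$ on $T^*$, so that the real exact part drops out under normalization and the surviving purely imaginary part is exactly $i\JJ\,d\log|s|$. As a sanity check I would specialize to an ordinary complex structure, where $\alpha=\del\log|s|^2$ reproduces the connection one-form of the Chern connection in a holomorphic frame, confirming that the statement is the expected generalization of the classical Poincar\'e--Lelong expression.
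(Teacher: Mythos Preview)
Your proposal is correct and follows essentially the same route as the paper: both arguments exploit holomorphicity of $s$ to kill the $\overline L$-component of $Ds$ and then use unitarity together with the $L\oplus\overline L$ decomposition of $E\otimes\CC$ to pin down the connection form. The only difference is cosmetic---the paper normalises to $s/|s|$ first and then projects to $\overline L$ to read off $\A^{0,1}$, whereas you first write $Ds=\alpha\otimes s$ with $\alpha\in L$, determine $\alpha$ from the unitarity identity, and normalise afterwards; the sign you hedge on is indeed only a matter of convention.
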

\begin{proof}
Whenever $s$ is nonzero, we have
\[
D\frac{s}{|s|} = i\A \frac{s}{|s|}.
\]
Taking the projection to $\overline L$, we obtain
\[
d_L \log |s| = i \A^{0,1},
\]
so that $\A = -i(d_L-d_{\overline L})\log |s| = \JJ d\log|s|$, as
required.
\end{proof}
In particular, if $s$ is nonzero on an open dense set, then the
vector field $\pi\JJ d\log|s| = X$ must extend to a smooth vector
field on the whole of $M$, since $\pi(i\A)$ coincides with $\chi\in
C^\infty(T\otimes\mathfrak{u}(V))$, which is globally defined for
any generalized connection.  But the map $\pi\JJ|_{T^*}:T^*\lra T$
is actually a Poisson structure $Q\in C^\infty(\wedge^2 T)$
(see~\cite{Gualtieri:qr} for a discussion of this fact), and hence $s$
vanishes only along the zero locus of the Poisson structure $Q$,
which is a strong constraint on any generalized holomorphic section.

The above proposition may be used, by invoking the local existence
of nonvanishing holomorphic sections near points for which $\JJ$ is
regular (i.e. $Q$ has locally constant rank), to show that the
vector field $\chi$ of any Hermitian $\JJ$-holomorphic line bundle
must be a Poisson vector field.  It therefore defines a
characteristic class in the Poisson cohomology
of~\cite{Lichnerowicz}, which is the cohomology of the complex
$(C^\infty(\wedge^\bullet T), d_Q)$, where $d_Q\Pi = [Q,\Pi]$ is the
Schouten bracket with $Q$.
\begin{corollary}
The real vector field $X=-i\chi$ associated to any Hermitian
generalized holomorphic line bundle preserves the Poisson structure
$Q$, i.e. it is a Poisson vector field. Furthermore its Poisson
cohomology class $[X]\in H^1_{Q}(M)$ is independent of the Hermitian
metric.
\end{corollary}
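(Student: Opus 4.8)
The plan is to reduce both assertions to the generalized Poincar\'e--Lelong formula of Proposition~\ref{PL}, applied on the open dense locus where $\JJ$ is regular and local nonvanishing holomorphic sections exist, and then to propagate the resulting tensor identities to all of $M$ by continuity. Throughout I use that, for a line bundle, the splitting-independent tensor $\chi$ of the generalized connection $D=\delbar+\del^*$ has the form $\chi=iX$ with $X=-i\chi$ a globally defined real vector field, and that $\pi\JJ|_{T^*}=Q$.

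To show $X$ is Poisson, fix a Hermitian metric $h$ and a regular point $p$, i.e.\ a point near which $Q$ has locally constant rank. Such points are dense: on any neighborhood $N$ the maximal-rank locus is open (by lower semicontinuity of $\rk Q$) and nonempty, and $Q$ has constant rank there. Near $p$ choose a nonvanishing local holomorphic section $s$; Proposition~\ref{PL} gives $D=d+i\A$ with $\A=\JJ\,d\log|s|$, so projecting to $T$ yields
\[
\chi=\pi(i\A)=i\,\pi\JJ(d\log|s|)=i\,Q(d\log|s|),
\]
whence $X=Q(d\log|s|)=-d_Q(\log|s|)$ is locally the Hamiltonian vector field of $\log|s|$. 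Since $[Q,Q]=0$ gives $d_Q^2=0$, we obtain $[Q,X]=d_Q X=-d_Q^2(\log|s|)=0$ on the regular locus. As $[Q,X]\in C^\infty(\wedge^2 T)$ is a smooth tensor vanishing on a dense set, it vanishes on all of $M$, so $X$ is a Poisson vector field.

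For independence of the metric, let $h'=e^{2\phi}h$ with $\phi\in C^\infty(M,\RR)$ the globally defined logarithm of the ratio of the two metrics, and let $X,X'$ be the associated vector fields. With respect to $h'$ one has $\log|s|'=\log|s|+\phi$, so the computation above gives, on the regular locus,
\[
X'=Q(d\log|s|')=Q(d\log|s|)+Q(d\phi)=X-d_Q\phi.
\]
Because $\phi$ is global, $d_Q\phi$ is a globally defined vector field, so the identity $X'-X=-d_Q\phi$ holds on a dense set and therefore everywhere. Thus $X'-X$ is $d_Q$-exact and $[X']=[X]\in H^1_Q(M)$.

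The main obstacle is precisely this local-to-global step. The potential $\log|s|$ exists only where $s\neq 0$, and as noted before the statement $s$ vanishes exactly along the (generically nonempty) zero locus of $Q$, so $X$ is never globally Hamiltonian and $\log|s|$ is never a global potential; regularity of $\JJ$ is also what supplies the nonvanishing local holomorphic sections in the first place. The argument nonetheless succeeds because the two facts actually needed --- that the tensor $[Q,X]$ vanishes, and that $X'-X$ equals the globally defined exact field $-d_Q\phi$ --- are equalities of smooth tensors established on the dense regular locus, hence valid on all of $M$ by continuity. What remains is only to verify carefully the two inputs invoked in the text: openness and density of the regular locus, and the local existence of nonvanishing holomorphic sections there.
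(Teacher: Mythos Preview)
Your proof is correct and follows essentially the same approach as the paper: both use the Poincar\'e--Lelong formula to exhibit $X$ as locally Hamiltonian on the dense regular locus, conclude $L_XQ=0$ everywhere by continuity, and then observe that rescaling the Hermitian metric changes $X$ by a global Hamiltonian vector field. Your version simply spells out the density argument and the continuity step in more detail than the paper does.
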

\begin{proof}$X$ is Poisson since, by the proposition, it is locally
Hamiltonian on an open dense subset of $M$.  Hence $L_XQ=0$
everywhere.  Rescaling the Hermitian metric by a positive function
$e^f$, we obtain a new vector field $X'=X + \tfrac{1}{2}Qdf$, which
differs from $X$ by a global Hamiltonian vector field. Hence $[X]\in
H^1_{Q}(M)$ is independent of the choice of Hermitian structure.
\end{proof}

We may also deduce this result from the more general fact that the
tensor product of a $L$-module with a $\overline{L}$ module is a
Poisson module for $Q$ (This is a direct consequence of the fact
that the tensor product of the Dirac structures $L, \overline{L}$ is
the Dirac structure associated to $Q$, shown in~\cite{Gualtieri:qr}). For any
generalized holomorphic line bundle $V$, therefore, the trivial
bundle $V\otimes \overline{V}$ acquires a $Q$-module structure, and
therefore, as described in~\cite{MR1726784}, a characteristic class in
$H^1_Q(M)$.

There are always two natural $\JJ$-holomorphic line bundles on any
generalized complex manifold: the trivial bundle, for which $\chi=0$
(for the standard Hermitian structure), and the canonical line
bundle $K_\JJ$ of pure spinors associated to the maximal isotropic
subbundle $L\subset E\otimes\CC$.  Since
$K_\JJ\otimes\overline{K_\JJ}$ is naturally the determinant line
$\det T^*$, it follows that $[X] = [-i\chi]$ is actually the
\emph{modular class} of the Poisson structure $Q$, in the sense
of~\cite{MR1484598}.

\subsection{Generalized complex branes}
Suppose we have a submanifold $\iota:S\hookrightarrow M$ equipped
with a Courant trivialization $s:TS\lra E_S$.  The Dirac structure
$s(TS)\subset E_S$ may be canonically lifted to a maximal isotropic
subbundle of $\iota^*E$; this operation is called the push-forward
of Dirac structures~\cite{MR1973074}:
\[
\tau_S:=\iota_*s(TS) = \{e\in E\ : \ \pi(e)\in TS\ \text{and}\
e+\Ann(TS)\in s(TS)\}.
\]
Note that $\tau_S$ is an extension of the tangent bundle of $S$ by
its conormal bundle:
\[
\xymatrix{0\ar[r]&N^*S\ar[r]&\tau_S\ar[r]&TS\ar[r]&0}
\]
In the presence of the generalized complex structure, there is a
natural compatibility condition, as follows.
\begin{defn}
A \emph{generalized complex submanifold} is a trivialization of the
Courant algebroid along a submanifold $\iota:S\hookrightarrow M$
which is compatible with the generalized complex structure $\JJ$, in
the sense that
\begin{equation}
\JJ\tau_S = \tau_S,
\end{equation}
where $\tau_S = \iota_*s(TS)\subset \iota^*E$.
\end{defn}

As shown in~\cite{Gualtieri:qr}, in the complex case (and for the trivial
gerbe), generalized complex submanifolds correspond to holomorphic
submanifolds equipped with unitary holomorphic line bundles, whereas
in the symplectic case they correspond to Lagrangian submanifolds
equipped with flat line bundles \emph{or} the co-isotropic A-branes
of~\cite{MR2006226}.   A useful general example of a generalized
complex submanifold is the graph of an isomorphism of generalized
complex manifolds, as follows.  The notation $\overline\JJ$ denotes
the same endomorphism as $\JJ$ but in the opposite Courant algebroid
$\overline E$.
\begin{example}\label{isogc}
Let $(M,\JJ_M)$, $(N,\JJ_N)$ be generalized complex manifolds.  They
are isomorphic when there is a Courant algebroid isomorphism in the
sense of Example~\ref{couriso} which is a generalized complex
submanifold of the product $(M\times N, \overline \JJ_M \times
\JJ_N)$.
\end{example}

In~\cite{Gualtieri:qr}, it is shown that in the eigenspace decomposition with
respect to $\JJ$
\[
\tau_S\otimes\CC = \ell + \bar\ell,
\]
the $+i$ eigenbundle $\ell$ inherits a Lie bracket, by extending
sections randomly to sections over $M$ which remain $+i$
eigensections of $\JJ$, taking their Courant bracket and restricting
to $S$.  Thus $\ell$ becomes an elliptic complex Lie algebroid over
$S$. Therefore there is a notion of flat $\ell$-module.  These
$\ell$-modules are called branes in analogy to the physics
literature.
\begin{defn}
A \emph{generalized complex brane} is a vector bundle with flat
$\ell$-connection, supported over a generalized complex submanifold.
\end{defn}
\begin{remark}
Just as for generalized holomorphic bundles, we may choose to
represent branes using unitary connections with values in
$\tau_S^*$, i.e. operators
\[
D:C^\infty(V)\lra C^\infty(\tau_S^*\otimes V)
\]
with symbol given by the inclusion $T^*S\subset \tau_S^*$,  and with
vanishing curvature along $\ell\subset \tau_S\otimes\CC$.
\end{remark}
For a usual complex structure, a brane consists of a holomorphic
bundle $V$ supported on a complex submanifold $S\subset M$ together
with a choice of holomorphic section $\phi\in H^0(S,
N_{1,0}S\otimes\End(V))$ satisfying
\[
\phi\wedge\phi = 0\in H^0(S,\wedge^2 N_{1,0}S\otimes\End(V)),
\]
where $N_{1,0}S$ denotes the holomorphic normal bundle of $S$.

On the other hand, for a symplectic structure, branes are complex
flat bundles if they are supported on Lagrangian submanifolds; they
may also be supported on coisotropic submanifolds with holomorphic
structure transverse to the characteristic
foliation~\cite{MR2006226,Gualtieri:qr}, in which case they are transversally
holomorphic bundles, flat along the leaves.
\begin{example}[Higgs bundles] Let $C\subset X$ be a curve in
a complex surface equipped with holomorphic symplectic form
$\Omega\in H^0(X,\Omega^{2,0})$, e.g. a K3 surface.  Also, let
$V\lra C$ be a Higgs bundle in the sense of~\cite{HitHiggs}, i.e. a
holomorphic bundle together with a Higgs field $\theta\in
H^0(C,\Omega^1\otimes\End(V))$.  Since $C$ is Lagrangian with
respect to $\Omega$, we have an isomorphism $T_{1,0}C\lra
N^*_{1,0}C$, so that we may form $\phi = \Omega^{-1}\theta\in
H^0(C,N_{1,0}\otimes\End(V))$, making $(V,\phi)$ into a brane for
the complex structure.

On the other hand, if $(V,\theta)$ is a stable Higgs bundle, then by
the existence theorem~\cite{HitHiggs} for solutions to Hitchin's
equations we obtain a complex flat connection $\nabla$ on $V$,
rendering $(C,V,\nabla)$ into a symplectic brane with respect to
either the real or imaginary parts of $\Omega$.
\end{example}

\begin{example}\label{rest}
Let $V$ be a generalized holomorphic bundle, i.e. a complex vector
bundle equipped with a flat $L$-connection, where $L=\ker(\JJ-i1)$
for $\JJ$ a generalized complex structure.  Then the pullback of $V$
to any generalized complex submanifold $S\subset M$ defines a
generalized complex brane, as can be seen easily from the inclusion
$\ell\subset L$.
\end{example}

Another simple example of a generalized complex brane occurs when it
is supported on an isomorphism of generalized complex manifolds, as
in Example~\ref{isogc}.

\begin{prop}\label{transbranes}
Let $S\subset M_0\times M_1$ define an isomorphism of the
generalized complex manifolds $(M_0,\JJ_0)$, $(M_1,\JJ_1)$.  Then
the Lie algebroid $\ell$ is isomorphic to both $L_i=\ker(\JJ_i-i1)$,
so that branes on $S$ may be identified with generalized holomorphic
bundles on either manifold.
\end{prop}
\begin{proof}
Let $\pi_i:M_0\times M_1\lra M_i$ be the usual projection maps. The
subbundle $\ell\subset (\pi_0^*L_0\oplus \pi_1^*L_1)|_S$ is
transverse to both $\pi_i^*L_i$, as we now show.  If $x\in \ell\cap
\pi_0^*L_0$, for instance, then $(\pi_1)_*x =
\varphi_*((\pi_0)_*x)$, where $\varphi$ is the diffeomorphism
defining $S$. This implies $(\pi_i)_*x=0$ and $x\in N^*S\otimes\CC$,
which clearly is transverse to $\pi_0^*L_0$. Hence $x=0$, and
similarly for $L_1$.

This transversality means that we have isomorphic bundle maps onto
each factor:
\begin{equation}
\xymatrix{L_0 \ar[d]&\ell\ar[l]_{p_0}\ar[d]\ar[r]^{p_1}&L_1\ar[d]
\\M_0&S\ar[l]_{\pi_0}\ar[r]^{\pi_1}&M_1}
\end{equation}
We now show that the projections $p_0,p_1$ are isomorphisms of Lie
algebroids.

Given $Z\in C^\infty(S,\ell)$, let $X=p_0(Z)$ and $Y=p_1(Z)$.  Then
 $Z$ may be expressed as $Z = (\pi_0^*X+ \pi_1^*Y)|_S$. Computing the
bracket of $Z,Z'$, we may use the given extensions to $M_0\times
M_1$ and compute their Courant bracket:
\begin{equation}\begin{split}
[Z,Z'] &= [\pi_0^*X +\pi_1^*Y, \pi_0^*X' + \pi_1^*Y']|_S\\
&=[\pi_0^*X,\pi_0^*X']|_S + [\pi_1^*Y,\pi_1^*Y']|_S\\
&=(\pi_0^*[X,X'] + \pi_1^*[Y,Y'])|_S,
\end{split}\end{equation}
where we use the fact that sections pulled back from opposite
factors $M_0, M_1$ Courant commute in the product.  Applying the
projections to the final formula, we obtain
\[
p_0([Z,Z'])=[p_0(Z),p_0(Z')], \ \ \ \ \ p_1([Z,Z']) =
[p_1(Z),p_1(Z')],
\]
as required.
\end{proof}
We now describe the general form of a generalized complex brane when
it is supported on the whole manifold $M$; these are usually called
``space-filling branes''.  We first observe that the requirement
that $M$ be a generalized complex submanifold of itself places a
very strong constraint on $\JJ$.

\begin{prop}\label{selfsub}
$(M,\JJ)$ is a generalized complex submanifold of itself if and only
if there exists an integrable isotropic splitting $E=T\oplus T^*$ of
the Courant algebroid with respect to which $\JJ$ has the form:
\begin{equation}\label{upt}
\JJ = \begin{pmatrix}I&Q\\ & -I^*\end{pmatrix},
\end{equation}
where $I$ is a usual complex structure on the manifold and $\sigma =
P+iQ$, for $P=IQ$, is a holomorphic Poisson structure, i.e. it
satisfies $[\sigma,\sigma]=0$.
\end{prop}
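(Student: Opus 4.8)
The plan is to specialize the definition of generalized complex submanifold to the case $S=M$ and then reduce the integrability of $\JJ$ to a condition on the pair $(I,Q)$. First I would observe that for $S=M$ we have $\Ann(TM)=0$, hence $E_S=E$ and $\iota=\id$, so a Courant trivialization along $M$ is nothing but an integrable isotropic splitting $s\colon T\lra E$ with $\tau_M=s(T)$. Since $s(T)$ is maximal isotropic, integrability forces $\IP{[s(X),s(Y)],s(Z)}=0$, so by~\eqref{ache} the three-form $H$ vanishes and in the induced identification $E=T\oplus T^*$ the Courant bracket is the untwisted one $[X+\xi,Y+\eta]=[X,Y]+L_X\eta-i_Yd\xi$. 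The compatibility condition $\JJ\tau_M=\tau_M$ then says exactly that $\JJ$ preserves the summand $T$. Writing $\JJ$ in block form relative to $E=T\oplus T^*$, the requirement $\JJ(T)\subset T$ kills the lower-left block, while orthogonality (equivalently skew-adjointness of $\JJ$ for $\IP{\cdot,\cdot}$) forces the lower-right block to be $-I^*$ and the upper-right block $Q$ to be skew, and $\JJ^2=-1$ gives $I^2=-1$ together with $IQ=QI^*$. This last identity says that the real bivector $Q$ has no $(1,1)$-component, so that $\sigma=P+iQ$ (with $P=IQ$) is of type $(2,0)$, i.e. $\sigma\in C^\infty(\wedge^2 T_{1,0})$. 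Steps so far give an equivalence: $(M,\JJ)$ is a generalized complex submanifold of itself precisely when such a splitting putting $\JJ$ in the form~\eqref{upt} exists.

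Next I would extract the integrability content. Solving $\JJ(X+\xi)=i(X+\xi)$ in this splitting yields $\xi\in T^*_{0,1}$ and $X\in T_{1,0}$ modulo the contraction $Q\xi\in T_{0,1}$, so that $L=\ker(\JJ-i)$ is the direct sum of $T_{1,0}$ and the graph over $T^*_{0,1}$ of a fixed multiple of $\xi\mapsto Q\xi$. Integrability of $\JJ$ is the involutivity of $L$ under the untwisted Courant bracket, which I would check on the two families of generators. Brackets of sections of $T_{1,0}$ with one another have vanishing form part and must therefore land in $L\cap T=T_{1,0}$; this is exactly involutivity of $T_{1,0}$, equivalent by Newlander--Nirenberg to integrability of $I$. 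The remaining brackets—a section of $T_{1,0}$ against a graph section, and two graph sections—expand via the Leibniz and Jacobi properties of Section 2 into the $\del$- and $\delbar$-derivatives of $Q$; collecting the $(2,1)$- and $(3,0)$-components shows that involutivity is equivalent to $\delbar\sigma=0$ together with the Jacobi identity for $\sigma$, which is precisely the single real equation $[\sigma,\sigma]=0$ for the Schouten bracket. This establishes that $I$ is integrable and $\sigma$ is holomorphic Poisson.

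The converse direction requires no new work: given an integrable isotropic splitting in which $\JJ$ has the form~\eqref{upt} with $I$ integrable and $\sigma$ holomorphic Poisson, the first paragraph already exhibits $s(T)=T$ as an integrable isotropic splitting with $\JJ T=T$, hence realizes $M$ as a generalized complex submanifold of itself, and the computation of the second paragraph run in reverse confirms that $\JJ$ is a genuine generalized complex structure. The hard part will be that bracket computation: one must organize the expansion of the Courant bracket of two graph sections so that the holomorphic and antiholomorphic derivatives of $Q$ assemble cleanly into the type components of $[\sigma,\sigma]$, and verify that projecting back to $L$ imposes no constraint beyond $[\sigma,\sigma]=0$. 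I expect all the genuine content to sit there, with everything else being linear algebra and bookkeeping of the $(p,q)$-decomposition.
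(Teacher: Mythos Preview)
Your proposal is correct and follows essentially the same route as the paper: reduce the submanifold condition to $\JJ T=T$, use orthogonality and $\JJ^2=-1$ to get the block form with $I$ almost complex and $Q$ of type $(2,0)+(0,2)$, then identify involutivity of an eigenbundle of $\JJ$ with integrability of $I$ together with the holomorphic Poisson condition on $\sigma$. The only cosmetic difference is that the paper works with the $-i$-eigenbundle $T_{0,1}\oplus\Gamma_\sigma$ rather than your $+i$-eigenbundle, and it compresses the involutivity check into a single sentence where you (rightly) flag the bracket of two graph sections as the place requiring care.
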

\begin{proof}
Compatibility of the splitting with $\JJ$ forces $\JJ T = T$, which
holds iff $\JJ$ is upper triangular, and the orthogonality of $\JJ$
together with the fact $\JJ^2=-1$ guarantees that $I$ is an almost
complex structure and that $Q$ is a bivector of type $(2,0)+(0,2)$.
The $-i$-eigenbundle of $\JJ$ is then the direct sum of $T_{0,1}$
with the graph of $\sigma:T^*_{1,0}\lra T_{1,0}$.  This is closed
(involutive) for the Courant bracket if and only if $T_{0,1}$ is
integrable and $[\sigma,\sigma]= 0$, as required.
\end{proof}

In the splitting $E=T\oplus T^*$ for which $\JJ$ has the
form~\eqref{upt}, we see that $\tau_S = TM$, and further that $\ell
= T_{1,0}$, so that $\ell$-modules are precisely holomorphic bundles
with respect to the complex structure $I$.
\section{Multiple branes and holomorphic Poisson varieties}
Suppose that we have a Courant trivialization $s$ making $(M,\JJ)$ a
generalized complex submanifold of itself, so that $E=T\oplus T^*$
and $\JJ$ has the form~\eqref{upt}.  Now we investigate the
consequences of having a second trivialization $s'$ which is also
compatible with $\JJ$.  Let $F\in \Omega^2_{cl}(M,\RR)$ be the
2-form taking $s$ to $s'$. By Proposition~\ref{selfsub}, and the
fact that the Poisson structure $Q$ is independent of splitting, we
have
\begin{equation}\label{twobrane}
\begin{pmatrix}1&\\ -F& 1\end{pmatrix}\begin{pmatrix}I&Q\\ & -I^*\end{pmatrix}\begin{pmatrix}1&\\ F& 1\end{pmatrix}
=\begin{pmatrix}J&Q\\&-J^*\end{pmatrix},
\end{equation}
for a second complex structure $J$ such that $\sigma' = JQ + iQ$ is
holomorphic Poisson.  In particular we note the important fact that
a generalized complex structure may be expressed as a holomorphic
Poisson structure \emph{in several different ways}, and with respect
to different underlying complex structures, depending on the choice
of splitting.
 Equation~\eqref{twobrane} is equivalent to the conditions
\begin{align}\label{grpd}
\begin{cases} J-I = QF,\\
FJ + I^*F=0.
\end{cases}
\end{align}
Phrased as a single condition on $F$, we obtain the nonlinear
equation
\begin{equation}\label{nlin}
FI + I^*F + FQF = 0,
\end{equation}
which may be viewed as a deformation of the usual condition
$FI+I^*F=0$ that $F$ be of type $(1,1)$ with respect to the complex
structure.  Equation~\eqref{nlin} has been studied in~\cite{MR2055289},
where it was shown that it corresponds to a noncommutative version of the
$(1,1)$ condition via the Seiberg-Witten transform on tori.  We take
a different approach here, focusing rather on a groupoid
interpretation of the equivalent system~\eqref{grpd}.

The set of compatible global Courant trivializations forms a
groupoid; we may label each trivialization by the complex structure
it induces on the base, and we see from~\eqref{twobrane}
or~\eqref{grpd} that if $F_{IJ}$ takes $I$ to $J$ and $F_{JK}$ takes
$J$ to another trivialization $K$, then $F_{IJ}+F_{JK}$ takes $I$ to
$K$.
\begin{defn}\label{groupoid}
Fix a real manifold $M$ with real Poisson structure $Q$.  Let
$\mathcal{G}$ be the groupoid whose objects are holomorphic Poisson
structures $(I_i, \sigma_i)$ on $M$ with fixed imaginary part given
by $\mathrm{Im}(\sigma_i)=Q$, and whose morphisms $\Hom(i,j)$
consist of real closed 2-forms $F_{ij}\in \Omega^2_{cl}(M,\RR)$ such
that
\begin{align}\label{grpd2}
\begin{cases}
I_j-I_i = QF_{ij},\\
F_{ij}I_j + I_i^*F_{ij}=0.
\end{cases}
\end{align}
The composition of morphisms is then simply addition of 2-forms
$F_{ij}+F_{jk}$. In keeping with the interpretation of $F_{ij}$ as
differences between gerbe trivializations, we could define
$\Hom(i,j)$ to consist of unitary line bundles $L_{ij}$ with
curvature $F_{ij}$, such that composition of morphisms would
coincide with tensor product.
\end{defn}
Automorphisms of the Courant algebroid which fix $\JJ$ give rise to
automorphisms of the groupoid of trivializations defined above; we
describe these now.  Orthogonal automorphisms of the standard
Courant bracket on $T\oplus T^*$ consist of pairs $(\varphi,B)\in
\mathrm{Diff}(M)\times \Omega^2_{cl}(M,\RR)$, which act on $T\oplus
T^*$ via $X+\xi\mapsto \varphi_*X + (\varphi^{-1})^*\xi +
i_{\varphi_*X} B$. Since our generalized complex structure has the
form~\eqref{upt}, we may easily determine its automorphism group.
\begin{prop}\label{auto}
The automorphism group $\mathrm{Aut}(\JJ)$ of the generalized
complex structure~\eqref{upt} is the set of pairs
$(\varphi,B)\in\mathrm{Diff}(M)\times \Omega^2_{cl}(M,\RR)$ such
that
\begin{align*}
Q^\varphi &= Q\\
I^\varphi - I &= QB\\
BI^\varphi + I^*B &= 0,
\end{align*}
where $Q^\varphi = \varphi_*Q$ and $I^\varphi = \varphi_* I
\varphi_*^{-1}$.
\end{prop}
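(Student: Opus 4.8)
The plan is to exploit the explicit parametrization of Courant-algebroid automorphisms recalled just before the statement. Any orthogonal automorphism $\Psi$ of the standard bracket on $E=T\oplus T^*$ has the form $\Psi = e^B\varphi_*$ for a diffeomorphism $\varphi$ and a closed $2$-form $B$, where in block form with respect to $E=T\oplus T^*$ we write
\[
\varphi_* = \begin{pmatrix}\varphi_* & 0\\ 0 & (\varphi^{-1})^*\end{pmatrix},\qquad e^B = \begin{pmatrix}1 & 0\\ B & 1\end{pmatrix}.
\]
The condition that $\Psi$ lie in $\mathrm{Aut}(\JJ)$ is precisely that it commute with $\JJ$, i.e.\ $\Psi\JJ\Psi^{-1}=\JJ$. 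So the whole proposition reduces to computing this conjugation blockwise and reading off the resulting equations; the only real content is the bookkeeping.

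First I would conjugate by $\varphi_*$ alone. Since $\varphi_*$ is block-diagonal and preserves the splitting, it simply transports the tensors $I$ and $Q$, giving again an upper-triangular form
\[
\varphi_*\JJ\varphi_*^{-1} = \begin{pmatrix}I^\varphi & Q^\varphi\\ 0 & -(I^\varphi)^*\end{pmatrix},
\]
with $I^\varphi = \varphi_* I \varphi_*^{-1}$ and $Q^\varphi = \varphi_* Q$; here I use that the transpose of pushforward is pullback, so that the $(2,2)$-entry is exactly $-(I^\varphi)^*$. Then I would apply the $B$-field conjugation $e^B(\,\cdot\,)e^{-B}$ to this matrix, expand the $2\times 2$ product, set the outcome equal to $\JJ$, and compare the four blocks. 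The $(1,2)$-block yields immediately $Q^\varphi = Q$, and the $(1,1)$-block yields $I^\varphi - Q^\varphi B = I$, which becomes $I^\varphi - I = QB$ once the first relation is used.

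The step needing the most care is the $(2,1)$-block, which emerges as $BI^\varphi - BQ^\varphi B + (I^\varphi)^*B = 0$ and so carries a spurious quadratic term $BQB$. To remove it I would transpose the relation $I^\varphi - I = QB$, using the antisymmetry identities $B^*=-B$ for a $2$-form and $Q^*=-Q$ for a bivector, to obtain $(I^\varphi)^* = I^* + BQ$. Substituting this into the $(2,1)$-block makes the two $BQB$ terms cancel, leaving exactly $BI^\varphi + I^*B = 0$, the third equation. Finally I would verify that the $(2,2)$-block $BQ^\varphi - (I^\varphi)^* = -I^*$ imposes nothing new: it is merely the transpose of the $(1,1)$-relation under the same antisymmetry identities. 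This shows the three displayed equations are jointly necessary and sufficient and that the remaining blocks are automatically consistent, completing the determination of $\mathrm{Aut}(\JJ)$.
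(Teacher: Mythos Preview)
Your argument is correct and is exactly the computation the paper has in mind: the proposition is stated without proof, with the remark that one ``may easily determine'' the automorphism group, and the intended calculation is the block-matrix conjugation $e^{B}\varphi_*\,\JJ\,(e^{B}\varphi_*)^{-1}=\JJ$ (the special case $\varphi=\mathrm{id}$ is carried out explicitly in equation~\eqref{twobrane}). Your handling of the $(2,1)$-block via the transpose identity $(I^\varphi)^*=I^*+BQ$ is the right way to eliminate the quadratic term and recover the stated linear condition.
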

These automorphisms therefore act on the groupoid of global
generalized complex submanifolds~\eqref{grpd2}, sending
$(I_i,\sigma_i)\mapsto (I_i^\varphi, (\varphi^{-1})^*\sigma_i)$ and
$F_{ij}\mapsto (\varphi^{-1})^*F_{ij}+B$. Of course, we may wish to
interpret $B$ as the curvature of a unitary line bundle $U$, in
which case it would act on the groupoid line bundles $L_{ij}$ by
tensor product $L_{ij}\mapsto (\varphi^{-1})^*L_{ij}\otimes U$.

Instead of viewing $F_{ij}$ as the difference between two
generalized complex submanifolds of $(M,\JJ)$, we may interpret
Equation~\eqref{twobrane} as giving an isomorphism between two
different generalized complex structures on $T\oplus T^*$.  This
rephrasing leads immediately to the following.

\begin{prop}\label{diffint}
Let $(I_i,\sigma_i)$ and $(I_j,\sigma_j)$ be holomorphic Poisson
structures on $M$ with associated generalized complex structures
$\JJ_i,\JJ_j$ on $T\oplus T^*$ via~\eqref{upt}, let
$\mathrm{Im}(\sigma_i)=\mathrm{Im}(\sigma_j)=Q$, and let
$F_{ij}\in\Omega^2_{cl}(M,\RR)$ satisfy equation~\eqref{grpd2}. Then
the graph of $F_{ij}$ over the diagonal $\Delta\subset M\times M$
defines a generalized complex submanifold of $(M\times
M,\overline{\JJ}_i\times\JJ_j)$, yielding an isomorphism of
generalized complex manifolds
\begin{equation}\label{isogc2}
(M,\JJ_i)\stackrel{\cong}{\lra}(M, \JJ_j).
\end{equation}
\end{prop}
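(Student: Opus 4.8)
The plan is to deduce the statement from the characterization of isomorphisms in Examples~\ref{couriso} and~\ref{isogc}, taking the $B$-field transform by $F_{ij}$ as the underlying Courant isomorphism. First I would observe that, since $F_{ij}\in\Omega^2_{cl}(M,\RR)$ is closed, the map $T=e^{-F_{ij}}\colon X+\xi\mapsto X+\xi-i_XF_{ij}$ is an orthogonal automorphism of the standard Courant algebroid $E=T\oplus T^*$ covering the identity, as recalled in the discussion preceding Proposition~\ref{auto} (it is the pair $(\mathrm{id},-F_{ij})$). By Example~\ref{couriso}, the graph $\Gamma_T=\{(a,Ta):a\in E\}\subset\overline E\times E$, which is supported over the diagonal $\Delta\cong M$ (the graph of $\mathrm{id}_M$) and contains the conormal $N^*\Delta$, is therefore an integrable isotropic splitting of the pullback algebroid, i.e.\ a Courant trivialization. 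This $\Gamma_T$ is exactly what is meant by the graph of $F_{ij}$ over $\Delta$, and its integrability is free, being a consequence of $dF_{ij}=0$.

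The heart of the argument is compatibility with the generalized complex structure. I would show that $\Gamma_T$ is a generalized complex submanifold of $(M\times M,\overline\JJ_i\times\JJ_j)$, i.e.\ that $(\overline\JJ_i\times\JJ_j)\tau=\tau$ for the pushforward $\tau=\iota_*\Gamma_T$, by translating this into an intertwining relation for $T$. Since $\overline\JJ_i$ and $\JJ_i$ act by the same endomorphism (the bar only reverses the pairing), one has $(\overline\JJ_i\times\JJ_j)(a,Ta)=(\JJ_i a,\JJ_j Ta)$, and this lies in $\Gamma_T$ for every $a$ precisely when $\JJ_j T=T\JJ_i$, that is $\JJ_j e^{-F_{ij}}=e^{-F_{ij}}\JJ_i$.

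Rearranging this identity gives the conjugation
\[
\begin{pmatrix}1&\\ -F_{ij}&1\end{pmatrix}\JJ_i\begin{pmatrix}1&\\ F_{ij}&1\end{pmatrix}=\JJ_j,
\]
which is precisely~\eqref{twobrane} written for the pair $(I_i,I_j)$, hence equivalent to the system~\eqref{grpd2} by the computation already carried out after~\eqref{twobrane}. Since~\eqref{grpd2} holds by hypothesis, $\Gamma_T$ is compatible, so it is a generalized complex submanifold supported on the graph of a diffeomorphism. I would then invoke Example~\ref{isogc}: a Courant isomorphism which is simultaneously a generalized complex submanifold of the product $(M\times M,\overline\JJ_i\times\JJ_j)$ is exactly the data of an isomorphism $(M,\JJ_i)\stackrel{\cong}{\lra}(M,\JJ_j)$, yielding~\eqref{isogc2}.

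The main obstacle I anticipate is not conceptual but a matter of conventions: pinning down the correct signs in the opposite algebroid $\overline E$ — in particular how $\pi^*$ and the pairing behave on the first factor, which is what makes $\pi^*\Ann(T\Delta)$ agree with the copy of the conormal actually contained in $\Gamma_T$ — and checking that the pushforward $\tau=\iota_*\Gamma_T$ really does reduce, along $\Delta$, to the graph of $e^{-F_{ij}}$. Getting these identifications right is exactly what guarantees that the compatibility condition collapses cleanly onto~\eqref{grpd2} rather than onto a sign-variant of it.
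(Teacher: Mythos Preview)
Your proposal is correct and follows exactly the approach the paper has in mind: the paper gives no explicit proof, stating only that ``this rephrasing leads immediately to the following,'' where the rephrasing is precisely the passage from two compatible trivializations of one $\JJ$ to the conjugation identity~\eqref{twobrane} viewed as an isomorphism $e^{-F_{ij}}\JJ_i e^{F_{ij}}=\JJ_j$. You have simply unwound what ``immediately'' means by invoking Examples~\ref{couriso} and~\ref{isogc} and checking that graph-compatibility collapses to~\eqref{twobrane}, which is equivalent to~\eqref{grpd2}; your caveat about sign conventions in $\overline E$ is well-placed but does not affect the argument's validity.
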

In view of Proposition~\ref{transbranes}, this result implies that a
morphism $F_{ij}$ from $(I_i,\sigma_i)$ to $(I_j,\sigma_j)$ induces
an equivalence between the categories of generalized holomorphic
bundles associated to $\JJ_i,\JJ_j$.  We now explain this
equivalence explicitly, and its significance for the holomorphic
Poisson structures involved.

\begin{prop}
Let $\JJ$ be of the form~\eqref{upt}, for $I$ a complex structure
and $\sigma = P+iQ$ a holomorphic Poisson structure.  Then a
generalized holomorphic bundle is precisely a holomorphic Poisson
module~\cite{MR1465521}, i.e. a holomorphic bundle $V$ with an additional
action of the structure sheaf on the sheaf of holomorphic sections,
denoted $\{f,s\}$, satisfying
\begin{align}
\{f, gs\} &= \{f,g\}s + g\{f,s\},\label{one}\\
\{\{f,g\},s\} &= \{f,\{g,s\}\} - \{g,\{f,s\}\}\label{two},
\end{align}
where $f,g\in \mathcal{O}$, $s\in\mathcal{O}(V)$, and $\{f,g\}$
denotes the Poisson bracket induced by $\sigma$.
\end{prop}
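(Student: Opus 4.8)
The plan is to identify the complex Lie algebroid $L=\ker(\JJ-i1)$ for $\JJ$ of the form~\eqref{upt}, to split a flat $L$-connection into a holomorphic structure together with a ``Poisson-direction'' operator, and then to match the two module axioms~\eqref{one} and~\eqref{two} against, respectively, the Leibniz rule and the flatness of the connection.

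First I would make the splitting of $L$ explicit. By the proof of Proposition~\ref{selfsub}, in the integrable splitting realizing~\eqref{upt} the $-i$-eigenbundle is $T_{0,1}\oplus\ell_\sigma$, where $\ell_\sigma=\{\sigma(\eta)+\eta : \eta\in T^*_{1,0}\}$ is the graph of $\sigma\colon T^*_{1,0}\to T_{1,0}$; hence $L=T_{1,0}\oplus\overline{\ell_\sigma}$. Using the metric identification $L^*\cong\overline L=T_{0,1}\oplus\ell_\sigma$, a generalized holomorphic structure is an operator $\delbar\colon C^\infty(V)\to C^\infty(\overline L\otimes V)$ with $\delbar^2=0$. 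The summand $T_{0,1}$ supplies a Dolbeault operator whose integrability (the $\wedge^2T_{0,1}$-part of $\delbar^2=0$) makes $V$ a holomorphic bundle, while the summand $\ell_\sigma\cong T^*_{1,0}$ supplies the operator I will read as the Poisson action. When $Q=0$ this reduces to the description recalled above for the complex case, with $\{f,\cdot\}$ becoming the tensorial action of $\Phi\in H^0(T_{1,0}\otimes\End V)$.

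Next I would set up the dictionary and obtain~\eqref{one}. For a holomorphic function $f$ put $e_{\partial f}=\sigma(\partial f)+\partial f\in\ell_\sigma$, whose anchor is the Hamiltonian vector field $X_f=\sigma(\partial f)$, and define $\{f,s\}:=\delbar_{e_{\partial f}}s$, the component of $\delbar s$ along $e_{\partial f}$. Contracting the defining Leibniz rule of a generalized connection gives $\delbar_{e_{\partial f}}(gs)=(X_f g)\,s+g\,\delbar_{e_{\partial f}}s$, and since $X_f g=\langle\partial g,\sigma(\partial f)\rangle=\{f,g\}$, axiom~\eqref{one} follows immediately. I would also record that $\{f,s\}$ is again holomorphic when $f,s$ are: for $\overline X\in T_{0,1}$ the bracket $[\overline X,X_f]$ lands in $T_{0,1}$ because $X_f$ is a holomorphic vector field, so the mixed curvature $R(\overline X,e_{\partial f})=0$ collapses, on a holomorphic $s$, to $\bar\partial_{\overline X}\{f,s\}=0$.

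Finally, axiom~\eqref{two} is the $\wedge^2\ell_\sigma$-component of flatness. The essential input is the bracket identity $[e_{\partial f},e_{\partial g}]=e_{\partial\{f,g\}}$, which is precisely the integrability $[\sigma,\sigma]=0$ expressed on Hamiltonian sections of the Dirac structure $\ell_\sigma$, verified by a direct Courant-bracket computation in the trivialized ($H=0$) splitting. Substituting it into the vanishing curvature $R(e_{\partial f},e_{\partial g})=\delbar_{e_{\partial f}}\delbar_{e_{\partial g}}-\delbar_{e_{\partial g}}\delbar_{e_{\partial f}}-\delbar_{[e_{\partial f},e_{\partial g}]}=0$ yields exactly $\{f,\{g,s\}\}-\{g,\{f,s\}\}=\{\{f,g\},s\}$. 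The converse reverses these steps: a holomorphic structure furnishes the $T_{0,1}$-component of $\delbar$, a module bracket furnishes the $\ell_\sigma$-component (extended $C^\infty$-linearly from the $\partial f$, which span $T^*_{1,0}$), and~\eqref{one}, \eqref{two} together with holomorphicity of $\{f,\cdot\}$ reproduce the Leibniz rule and the vanishing of all three curvature pieces, so that $\delbar^2=0$. I expect the main obstacle to be the identity $[e_{\partial f},e_{\partial g}]=e_{\partial\{f,g\}}$ and the bookkeeping that separates $\delbar^2=0$ into its Dolbeault, mixed, and Poisson components; keeping the conjugation implicit in $L^*\cong\overline L$ consistent throughout is the only other delicate point.
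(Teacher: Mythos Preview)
Your strategy is exactly the paper's: split the Lie algebroid into a Dolbeault piece and a graph-of-$\sigma$ piece, read off~\eqref{one} from the Leibniz rule, and read off~\eqref{two} from flatness via the bracket identity on Hamiltonian sections. However, there is a bookkeeping error in your choice of eigenbundle that breaks two of your steps as written.

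You take $L=\ker(\JJ-i1)=T_{1,0}\oplus\overline{\ell_\sigma}$, so an $L$-connection differentiates along $T_{1,0}$ and $\overline{\ell_\sigma}$. Your section $e_{\partial f}=\sigma(\partial f)+\partial f$ then lies in $\ell_\sigma\subset\overline L$, not in $L$, so the expression $\delbar_{e_{\partial f}}s$ is ill-typed: an $L$-connection only contracts against sections of $L$. Moreover, under the pairing $L\times\overline L\to\CC$, the summand $T_{0,1}\subset\overline L$ is dual to $\overline{\ell_\sigma}\subset L$, whose anchor is $\overline\sigma$ rather than the identity, so that component does \emph{not} supply a Dolbeault operator (indeed the anchor can vanish where $\sigma$ does). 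The paper sidesteps this by working directly with the $-i$-eigenbundle $\ker(\JJ+i1)=T_{0,1}\oplus\Gamma_\sigma$ as the Lie algebroid (a tacit sign switch from the earlier convention): the flat connection then decomposes as $\delbar_V=\delbar'+\delbar''$ with $\delbar'$ a genuine holomorphic structure (differentiation along $T_{0,1}$, anchor the inclusion) and $\delbar''$ the Poisson operator (differentiation along $\Gamma_\sigma$, anchor the Hamiltonian vector field). Once you make this same switch, your remaining arguments---Leibniz giving~\eqref{one}, the identity $[e_{\partial f},e_{\partial g}]=e_{\partial\{f,g\}}$ together with $(\delbar'')^2=0$ giving~\eqref{two}, the mixed curvature giving holomorphicity of $\delbar''$, and the converse---go through verbatim and coincide with the paper's proof.
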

\begin{proof}
Let $L=\ker (\JJ + i1)$, so that for $\JJ$ as in~\eqref{upt}, we
have
\begin{equation}\label{eldc}
L = T_{0,1}\oplus \Gamma_{\sigma},
\end{equation}
where $\Gamma_{\sigma}=\{\xi + \sigma(\xi)\ :\ \xi\in T^*_{1,0}\}$.
Let $\delbar_V: C^\infty(V)\lra C^\infty(L^*\otimes V)$ be a
generalized holomorphic structure. Decomposing using~\eqref{eldc}
and identifying $\Gamma_\sigma = T^*_{1,0}$, we write $\delbar_V
=\delbar' +\delbar''$, where $\delbar':C^\infty(V)\lra
C^\infty(T^*_{0,1}\otimes V)$ is a usual holomorphic structure and
$\delbar'':C^\infty(V)\lra C^\infty(T_{1,0}\otimes V)$ satisfies,
for $f\in C^\infty(M,\CC)$ and $s\in C^\infty(V)$,
\[
\delbar''(fs) = f\delbar''s + Z_f\otimes s,
\]
where $Z_f=\sigma(df)$ is the Hamiltonian vector field of $f$.  This
is equivalent to condition~\eqref{one}. Furthermore $\delbar_V^2=0$
implies that $\delbar''$ is holomorphic and defines a Poisson module
structure via
\[
\{f, s\} = \delbar''_{\del f} s,
\]
since $(\delbar'')^2=0$ implies
$\delbar''_{\del\{f,g\}}=[\delbar''_{\del f},\delbar''_{\del g}]$,
which is equivalent to~\eqref{two}, as required.
\end{proof}
Letting $L_k=\ker(\JJ_k + i1)$, we see from
Equation~\eqref{twobrane} that $\exp(F_{ij})$ takes $L_i$ to $L_j$.
Hence the map on generalized holomorphic bundles induced by the
isomorphism~\eqref{isogc2} may be described as composition with
$\exp(F_{ij})$
\[
\delbar_i\mapsto e^{F_{ij}}\circ\delbar_i.
\]
This map may be made more explicit in terms of the associated
generalized connections.  Choose a Hermitian structure on the
$\JJ_i$-holomorphic bundle (i.e. $\sigma_i$-Poisson module), and let
$D=\nabla + \chi$ be the extension of $\delbar_i$ as in
Proposition~\ref{herm}.  Then $F_{ij}$ acts on $D$ via
\begin{equation}\label{expmap}
\begin{cases}
D\mapsto D' = \nabla' +  \chi,\\
\nabla' = \nabla + F_{ij}(\chi).
\end{cases}
\end{equation}
which then defines a $\sigma_j$-Poisson module. It is important to
note that the $\sigma_i$-Poisson module, which is $I_i$-holomorphic,
inherits via~\eqref{expmap} a $I_j$-holomorphic structure, without
the presence of any holomorphic map between $(M,I_j)$ and $(M,I_i)$.

Given this result, it is natural to ask how restrictive the
condition of admitting a Poisson module structure actually is.  The
following is a simple result describing the complete obstruction to
the existence of a Poisson module structure on a holomorphic line
bundle.
\begin{prop}\label{obstruction}
Let $M$ be a holomorphic Poisson manifold, and let $V$ be a
holomorphic line bundle on $M$. Then the Atiyah class of $V$,
$\alpha\in H^1(T^*_{1,0})$, combines with the Poisson structure
$\sigma\in H^0(\wedge^2 T_{1,0})$ to give the class $\sigma\alpha\in
H^1(T_{1,0})$. If $\sigma\alpha=0$, then there is a well-defined
secondary characteristic class $f_\alpha\in H^2_\sigma(M)$ in
Poisson cohomology.  $V$ admits a Poisson module structure if and
only if both classes $ \{\sigma\alpha,f_\alpha\}$ vanish.  The space
of Poisson module structures is affine, modeled on $H^1_\sigma(M)$.
\end{prop}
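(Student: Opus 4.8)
The plan is to reformulate a Poisson module structure, via the dictionary of the previous Proposition, as a flat holomorphic connection for the Poisson Lie algebroid $A=(T^*_{1,0},\sigma)$, whose anchor is $\sigma:T^*_{1,0}\to T_{1,0}$ and whose Lie algebroid cohomology $H^\bullet(A)=H^\bullet(C^\infty(\wedge^\bullet T_{1,0}),d_\sigma)$ is the Poisson cohomology $H^\bullet_\sigma(M)$. Writing $\delbar_V=\delbar'+\delbar''$ as in the proof above, the fixed holomorphic structure $\delbar'$ is the $T_{0,1}$-part, while $\delbar''$ is exactly such an $A$-connection; the cross-term in $\delbar_V^2=0$ forces $\delbar''$ to be holomorphic and the $(\delbar'')^2$-term is its flatness. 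The three invariants are then the Atiyah-Chern data of the line bundle $V$ for the algebroid $A$, which I would develop in the three stages below.

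First, the primary obstruction. A holomorphic $A$-connection is a holomorphic splitting of the Atiyah-type sequence for $A$, and the obstruction to its existence is the image of the ordinary Atiyah class $\alpha\in H^1(T^*_{1,0})$ under the dual anchor; since $A^*=T_{1,0}$ and the dual of $\sigma$ is again $\sigma$ up to sign, this image is exactly $\sigma\alpha\in H^1(T_{1,0})$. Concretely, in a holomorphic trivialization $\{U_i\}$ with transition functions $g_{ij}$, I take the local models $\{f,e_i\}_i=0$; their discrepancy on overlaps is the Hamiltonian vector field $\sigma(d\log g_{ij})$, whose \v{C}ech class is $\sigma\alpha$. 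Hence a global operator satisfying the Leibniz rule \eqref{one} exists if and only if $\sigma\alpha=0$, in which case one glues the local models using holomorphic vector fields $W_i$ with $W_i-W_j=\sigma(d\log g_{ij})$.

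Second, the secondary obstruction. Given such a global holomorphic $A$-connection, its curvature $R(f,g)s=\{f,\{g,s\}\}-\{g,\{f,s\}\}-\{\{f,g\},s\}$ is tensorial in $s$ and, $V$ being a line bundle, is a bivector $R\in H^0(\wedge^2 T_{1,0})$; Bianchi gives $d_\sigma R=0$, so $[R]\in H^2_\sigma(M)$ and I set $f_\alpha:=[R]$. The heart of the argument is that $f_\alpha$ is well-defined: altering the local holomorphic connections changes the cocycle $\alpha_{ij}$ only by a coboundary, and changing the gluing fields $W_i$ by a global holomorphic $W\in H^0(T_{1,0})$ shifts $R$ by $d_\sigma W$, hence fixes $[R]$; checking that no further choice enters is precisely the statement that $f_\alpha$ is a genuinely secondary class, defined only once $\sigma\alpha=0$. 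Condition \eqref{two} is the vanishing of $R$, so $V$ carries a Poisson module structure exactly when $R$ can be made to vanish, i.e. when $f_\alpha=0$; with the first stage this yields the ``if and only if'' for the pair $\{\sigma\alpha,f_\alpha\}$.

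Finally, the affine structure. If $\delbar''_1,\delbar''_2$ are two Poisson module structures extending the same $\delbar'$, their difference $\theta=\delbar''_1-\delbar''_2$ is tensorial by \eqref{one} and, since $\End V$ is trivial, is a section of $T_{1,0}$; the cross-term of $\delbar_V^2=0$ forces $\theta$ to be holomorphic, and the quadratic term, which vanishes for a line bundle, forces $d_\sigma\theta=0$, so that $\theta$ is a holomorphic Poisson vector field representing a class in $H^1_\sigma(M)$. Conversely any such $\theta$ may be added, so the set of structures, when nonempty, is affine over $H^1_\sigma(M)$. The step I expect to be the main obstacle is the well-definedness of $f_\alpha$ in the second stage: showing the secondary class depends neither on the local holomorphic connections nor on the chosen trivialization $W_i$ of the primary class. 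This is the usual delicacy of secondary, Massey-product type characteristic classes, and it is exactly where the hypothesis $\sigma\alpha=0$ must be used.
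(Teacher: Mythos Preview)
Your proposal is correct and follows essentially the same route as the paper: both identify the primary obstruction via the \v{C}ech cocycle $\sigma(d\log g_{ij})$ representing $\sigma\alpha$, both define the secondary class as the Poisson-cohomology class of $[\tilde X_i,\sigma]=d_\sigma W_i$ (which is exactly your curvature $R$), and both obtain the affine structure by observing that the difference of two module structures is a global holomorphic Poisson vector field. Your Lie-algebroid packaging (flat $A$-connection for $A=(T^*_{1,0},\sigma)$, Bianchi for $d_\sigma R=0$) is a clean conceptual wrapper, but the underlying computation is identical to the paper's.
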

\begin{proof}
A Poisson module structure on $V$ is a holomorphic differential
operator $\del:\mathcal{O}(V)\lra \mathcal{O}(T_{1,0}\otimes V)$
satisfying $\del(fs) =f\del s + Z_f\otimes s$, for $f\in\mathcal{O}$
and $s\in \mathcal{O}(V)$, where $Z_f$ is the $\sigma$-Hamiltonian
vector field of $f$, and such that the curvature vanishes.  Let
$\{U_i\}$ be an open cover of $M$ and let $\{s_i\in
\mathcal{O}(U_i,V)\}$ be a local trivialization of $V$ such that
$s_i = g_{ij} s_j$ for holomorphic transition functions $g_{ij}$;
then
\begin{equation}\label{locconn}
\del s_i = X_i\otimes s_i,
\end{equation}
where $X_i$ are holomorphic Poisson vector fields (since
$\del_{d\{f,g\}} = [\del_{df},\del_{dg}]$) such that
\begin{equation}\label{pc}
X_i-X_j = Z_{\log g_{ij}}.
\end{equation}
The Hamiltonian vector fields $Z_{\log g_{ij}} = \sigma(d\log
g_{ij})$ are a \v{C}ech representative for the image of the Atiyah
class under $\sigma$. Therefore, equation~\eqref{pc} holds if and
only if $\sigma\alpha=0\in H^1(T_{1,0})$.   If $\sigma\alpha=0$,
then we may solve~\eqref{pc} for some holomorphic vector fields
$\tilde X_i$. We can modify these by a global holomorphic vector
field so that they are each Poisson if and only if the global
bivector field $f_\sigma$ defined by $f_\sigma|_{U_i} = [\tilde
X_i,\sigma]$ vanishes in Poisson cohomology, i.e. $f_\sigma =
[Y,\sigma]$ for $Y\in H^0(T_{1,0})$, in which case $X_i =\tilde X_i
- Y|_{U_i}$ defines a Poisson module structure as required.

Given any holomorphic Poisson vector field $Z\in H^1_\sigma(M)$ and
Poisson module structure $\del$, the sum $\del+Z$ defines a new
Poisson module structure.  Conversely, two Poisson module structures
$\del',\del$ must satisfy $\del'-\del\in H^1_\sigma(M)$, as claimed.
\end{proof}

It is remarked in~\cite{MR1465521} that the canonical line bundle $K$
always admits a natural Poisson module structure for any holomorphic
Poisson structure $\sigma$ via the action, for $f\in\mathcal{O}$ and
$\rho\in\mathcal{O}(K)$,
\[
\{f,\rho\} = L_{Z_f}\rho.
\]
Based on these considerations, we obtain the following example.
\begin{example}
Let $M=\CC P^2$, equipped with a holomorphic Poisson structure
$\sigma\in H^0(\mathcal{O}(3))$.  Note that $H^1(T_{1,0})=0$. Then
$K=\mathcal{O}(-3)$ is canonically a Poisson module, and since
$\mathcal{O}(1)^{-3}=K$, we see that the obstruction $f_\sigma$ from
Proposition~\ref{obstruction} must vanish for $\mathcal{O}(1)$ as
well (note that $\dim H^2_\sigma(\CC P^2)=2$, so the obstruction
space is nonzero). Hence all holomorphic line bundles
$\mathcal{O}(k)$ admit Poisson module structures.  If $\sigma$ is
generic, these Poisson module structures are unique, since
$H^1_\sigma(M)=0$, due to the fact that only the zero holomorphic
vector field on $\CC P^2$ stabilizes a smooth cubic curve.
\end{example}

We conclude this section with a simple example of a generalized
complex manifold admitting multiple trivializations with
\emph{non-biholomorphic} induced complex structures.

\begin{prop}
Let $E_0= E\times \CC$, the trivial line bundle over an elliptic
curve $E$, and let $E_c$, for $c\in\RR$, be the alternative
holomorphic structure on $E\times\CC$ obtained by endowing the
bundle $E\times\CC$ with the holomorphic structure associated to the
point $ic\in H^1(E,\mathcal{O})=\CC$.  Then $E_0$ and $E_c$ are
diffeomorphic, non-biholomorphic complex manifolds.  They are
equipped with canonical holomorphic Poisson structures $\sigma_0,
\sigma_c$ vanishing to first order on the zero section, and
furthermore $(E_0,\sigma_0)$ and $(E_c,\sigma_c)$ are isomorphic as
generalized complex manifolds $\forall c\in\RR$ (and hence have
equivalent categories of Poisson modules).
\end{prop}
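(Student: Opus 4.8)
The plan is to realize both $E_0$ and $E_c$ as holomorphic Poisson structures on the single smooth manifold $M=E\times\CC$, to check that they share the same imaginary part, and then to exhibit the asserted isomorphism as a morphism in the groupoid $\mathcal{G}$ of Definition~\ref{groupoid}, i.e. via Proposition~\ref{diffint}.

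First I would set up the two complex structures on $M$. Writing $z$ for the coordinate on $E=\CC/\Lambda$ and $w=u+iv$ for the fibre coordinate, $I_0$ is the standard product structure, while the class $ic\in H^1(E,\mathcal{O})\cong H^{0,1}(E)$, represented by the harmonic form $ic\,d\bar z$, deforms the bundle and hence, by Kodaira--Spencer, twists $I_0$ by the Beltrami differential $\mu_c=ic\,w\,d\bar z\otimes\partial_w$ built from the Euler vector field $w\,\partial_w$. Concretely $T^{1,0}_c=\langle \partial_z-ic\bar w\,\partial_{\bar w},\ \partial_w\rangle$, and on the universal cover $\tilde w=w\,e^{-ic\bar z}$ is an $I_c$-holomorphic fibre coordinate, exhibiting $E_c$ as the total space of the flat line bundle $L_c$ with holonomy $\lambda\mapsto e^{-ic\bar\lambda}$. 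Since the underlying smooth bundle is unchanged, $E_0$ and $E_c$ are diffeomorphic; they are not biholomorphic because, whenever $L_c$ is nontrivial of infinite order, $H^0(E_c,\mathcal{O})=\bigoplus_{n\ge0}H^0(E,L_c^{-n})=\CC$ consists only of constants, whereas $E_0=E\times\CC$ carries all of $\CC[w]$ (the remaining torsion values of $c$ being handled separately, e.g. by the normal bundle of the zero section).

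Next I would produce the Poisson structures. Because $T_E\cong\mathcal{O}$, the anticanonical bundle of the total space of any line bundle $L\to E$ is $K^{-1}\cong\pi^*L$, so the tautological section of $\pi^*L$ is a canonical holomorphic bivector vanishing to first order along the zero section; on a complex surface $[\sigma,\sigma]\in\wedge^3 T_{1,0}=0$ automatically, so these are Poisson. In the product coordinates $\sigma_0=w\,\partial_w\wedge\partial_z$, and the same recipe in the $I_c$-holomorphic frame $(z,\tilde w)$ gives $\sigma_c=\tilde w\,\partial_{\tilde w}\wedge\partial^c_z=w\,\partial_w\wedge(\partial_z-ic\bar w\,\partial_{\bar w})$, where $\partial^c_z$ is the $I_c$-holomorphic horizontal field dual to $dz$.

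The heart of the argument, and the step I expect to be the main obstacle, is the comparison of imaginary parts. Expanding $\sigma_c-\sigma_0=-ic|w|^2\,\partial_w\wedge\partial_{\bar w}=\tfrac{c}{2}|w|^2\,\partial_u\wedge\partial_v$ shows the difference is \emph{purely real and vertical}; hence $\mathrm{Im}\,\sigma_c=\mathrm{Im}\,\sigma_0=:Q$, so by Proposition~\ref{selfsub} the pairs $(I_0,\sigma_0)$ and $(I_c,\sigma_c)$ are objects of the same groupoid $\mathcal{G}$. It then remains to find the connecting morphism, a closed real $F$ solving~\eqref{grpd2}. Computing $I_c-I_0$ (which sends $\partial_z\mapsto 2c\bar w\,\partial_{\bar w}$, $\partial_{\bar z}\mapsto 2cw\,\partial_w$, and annihilates $\partial_w,\partial_{\bar w}$) and inverting against $Q$, I would obtain $F=4ic\,dz\wedge d\bar z=8c\,dx\wedge dy$, a constant multiple of the pullback of the area form of $E$ --- manifestly real, closed, and globally defined on $M$ even though $\tilde w$ is not. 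A direct check that $FI_c+I_0^*F=0$ then confirms both conditions of~\eqref{grpd2}, so by Proposition~\ref{diffint} the graph of $F$ over the diagonal is a generalized complex submanifold yielding $(E_0,\JJ_0)\xrightarrow{\cong}(E_c,\JJ_c)$, and the equivalence of the categories of Poisson modules follows from Proposition~\ref{transbranes}. The conceptual content --- and the reason the matching is delicate rather than automatic --- is that a genuine deformation of complex structure along $H^1(E,\mathcal{O})$, which destroys the biholomorphism type, is undone by a pure $B$-field and is therefore invisible to the underlying generalized complex structure.
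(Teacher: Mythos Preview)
Your argument is correct and follows essentially the same route as the paper: both set up explicit coordinates on $E\times\CC$, write down $I_c$ and $\sigma_c$ directly, observe that $\sigma_c-\sigma_0$ is purely real so that $\mathrm{Im}\,\sigma_c=\mathrm{Im}\,\sigma_0$, and then exhibit the isomorphism via a closed real $2$-form which is a constant multiple of the pullback of the area form on $E$. The only methodological difference is that the paper packages the verification as a single pure-spinor identity $e^{F_c}e^{\sigma_0}\Omega_0=e^{\sigma_c}\Omega_c$ (working in multiplicative coordinates $E=\CC^*/\langle\lambda\rangle$), whereas you check the two groupoid conditions~\eqref{grpd2} separately in additive coordinates; these are equivalent tests of the same $B$-field transform, and your added discussion of non-biholomorphism via $H^0(E_c,\mathcal{O})$ supplies a detail the paper leaves implicit.
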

\begin{proof}
Represent $E$ as $\CC^*/\{z\mapsto \lambda z\}$ and let $w$ be the
linear coordinate on the fiber of $E\times \CC$.  Then the
holomorphic structure $E_c$ is given by the complex volume form
\[
\Omega_c = \frac{dz}{z}\wedge\left(dw + icw\frac{d\bar z}{\bar
z}\right),
\]
and the holomorphic Poisson structure $\sigma_c$ is given by
\[
\sigma_c = \left(z\frac{\del}{\del z} + ic \bar w\frac{\del}{\del
\bar w}\right)\wedge w\frac{\del}{\del w}
\]
The pure spinor corresponding to the generalized complex structure
$(E_c,\sigma_c)$ is
\[
\rho_c = e^{\sigma_c}\Omega_c.
\]
Now let $F_c = ic\tfrac{dz\wedge d\bar z}{z\bar z}$ be a real
multiple of the volume form on $E$ (which may be viewed as a
curvature when $c\in 2\pi\ZZ$).  Then we verify that
\begin{align*}
e^{F_c}e^{\sigma_0}\Omega_0 &= w +
\tfrac{dz}{z}\wedge\left(dw+icw\tfrac{d\bar z}{\bar z}\right)\\
&=e^{\sigma_c}\Omega_c,
\end{align*}
showing that $(E_0,\sigma_0)$ and $(E_c,\sigma_c)$ are isomorphic as
generalized complex manifolds.
\end{proof}

\section{Relation to generalized K\"ahler geometry}

A generalized K\"ahler structure is a pair $(\JJ_A,\JJ_B)$ of
commuting generalized complex structures such that
\[
 G(\cdot,\cdot)=\IP{\JJ_A\cdot,\JJ_B\cdot}
\]
is a generalized Riemannian metric.

In~\cite{Gualtieri:rp} it is shown that the integrability of the pair $(\JJ_A,\JJ_B)$ is equivalent to the fact that the induced decomposition of the definite subspaces $C_\pm$ given by
\[
 C_\pm\otimes\CC = L_\pm\oplus \overline{L_\pm},
\]
where $L_\pm = \ker(\JJ_A-i1)\cap\ker(\JJ_B\mp i1)$, satisfies the
condition that $L_\pm$ are each involutive. Using the
generalized Bismut connection $D$ introduced in Theorem~\ref{lclc}, we
provide the following equivalent description of generalized K\"ahler
geometry.
\begin{theorem}
Let $G$ be a generalized metric and let $\JJ$ be a $G$-orthogonal
almost generalized complex structure.  Then $(\JJ,G)$ defines a
generalized K\"ahler structure if and only if $D\JJ=0$ and the
torsion $T_D\in C^\infty(\wedge^3E)$ is of type $(2,1)+(1,2)$ with
respect to $\JJ$.
\end{theorem}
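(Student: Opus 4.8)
The plan is to translate both conditions into the involutivity criterion for generalized K\"ahler geometry recalled above from~\cite{Gualtieri:rp}: the pair $(\JJ,G\JJ)$ is generalized K\"ahler exactly when the subbundles $L_\pm = \ker(\JJ - i1)\cap (C_\pm\otimes\CC)$ are each closed under the Courant bracket. Since $\JJ$ is $G$-orthogonal it commutes with $G$ and hence preserves the splitting $E = C_+\oplus C_-$; thus $\JJ$ restricts to almost complex structures on $C_\pm$ whose $+i$-eigenbundles are precisely $L_\pm$, and the partner structure is $\JJ_B = G\JJ$ (with $G$ read as the endomorphism $\pm 1$ on $C_\pm$). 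Because the generalized Bismut connection of Theorem~\ref{lclc} also preserves $C_\pm$, decomposing as $D = D^+\oplus D^-$, the condition $D\JJ = 0$ is equivalent to asserting that $D^\pm$ preserve $L_\pm$ (and their conjugates).

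First I would isolate the computational core. For $a,b,c\in C^\infty(L_+)$ the bundle $L_+$ is isotropic, so $\IP{a,b}\equiv 0$ and $[a,b]_{sk} = [a,b]$; granting that $D$ preserves $L_+$, each of $D_a b, D_b a, D_c a, D_c b$ lands in $L_+$ and so pairs trivially with the $L_+$-entries. Substituting into the definition of $T_D$ collapses every term but one, leaving
\[
\IP{[a,b],c} = -T_D(a,b,c),\qquad a,b,c\in C^\infty(L_+).
\]
The identical manipulation with $a,b\in C^\infty(L_+)$ but $c\in C^\infty(C_-\otimes\CC)$, using $\IP{C_+,C_-}=0$ and the fact from Proposition~\ref{torscalc} that $T_D\in\wedge^3 C_+\oplus\wedge^3 C_-$, gives $\IP{[a,b],c} = -T_D(a,b,c) = 0$. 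Reading these through $E\otimes\CC = L_+\oplus\overline{L_+}\oplus L_-\oplus\overline{L_-}$: the second computation says $[a,b]$ has no $C_-\otimes\CC$-component, while the first says its $\overline{L_+}$-component is exactly the $(3,0)$-part of $T_D$ on $C_+$. Hence, once $D\JJ=0$ is in force, $L_+$ is involutive if and only if $T_D(L_+,L_+,L_+)=0$, and symmetrically for $L_-$. As $T_D$ is real, its only possible $(3,0)$-components are $T_D(L_\pm,L_\pm,L_\pm)$, so the joint vanishing is precisely the statement that $T_D$ is of type $(2,1)+(1,2)$ with respect to $\JJ$.

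It then remains to tie $D\JJ = 0$ to integrability. For the reverse implication there is nothing further: $D\JJ=0$ is assumed, the previous paragraph yields involutivity of $L_\pm$, and the cited criterion upgrades this to generalized K\"ahlerity (in particular making $\JJ$ itself integrable). For the forward implication I would derive $D\JJ = 0$ from generalized K\"ahlerity. Writing $D_Z V = [Z_- + CZ_+, V]_+$ for $V\in C^\infty(C_+)$ --- which follows from formula~\eqref{lc} together with $C(C_\pm)=C_\mp$ --- I must verify $[A,V]_+\in L_+$ for $A\in C^\infty(C_-)$ and $V\in L_+$. Splitting $A$ into its $L_-$ and $\overline{L_-}$ parts, this is obtained by projecting to $C_+$ the involutivity of $\ker(\JJ-i1) = L_+\oplus L_-$ and of $\ker(\JJ_B - i1) = L_+\oplus\overline{L_-}$; the analogous computation on $C_-$ treats $D^-$. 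Assembling the two implications proves the theorem.

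The hard part is this last step. The identity $\IP{[a,b],c}=-T_D(a,b,c)$ is only available once $D$ is known to preserve $L_+$, and establishing $D\JJ = 0$ genuinely requires the full integrability of both $\JJ$ and $G\JJ$: the cross-brackets $[L_-,L_+]$ and $[\overline{L_-},L_+]$ are not governed by involutivity of $L_+$ alone, so one must invoke that $\ker(\JJ-i1)$ and $\ker(\JJ_B-i1)$ are each Dirac structures, not merely the two pieces $L_\pm$. Everything else is the bookkeeping of eigencomponents under the metric pairing, which the displayed identity and Proposition~\ref{torscalc} organize.
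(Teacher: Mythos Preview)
Your argument is correct, and it takes a genuinely different route from the paper's own proof.  The paper argues the reverse implication by a direct computation of the Nijenhuis tensor: writing $\IP{N_\JJ(x,y),z}$ for arbitrary $x,y,z\in C^\infty(E)$, one replaces each bracket by $D$-derivatives plus a torsion term; the eight $D$-terms cancel pairwise from $D\JJ=0$, and the four torsion terms $T_D(\JJ x,\JJ y,z)+T_D(\JJ x,y,\JJ z)+T_D(x,\JJ y,\JJ z)+T_D(x,y,z)$ vanish exactly when $T_D$ has no $(3,0)+(0,3)$ part.  The same computation applies to $\JJ'=G\JJ$, using Proposition~\ref{torscalc} to see that $T_D\in\wedge^3 C_+\oplus\wedge^3 C_-$ is also of type $(2,1)+(1,2)$ for $\JJ'$.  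The forward direction is left to the reader.  Your approach instead restricts to $C_\pm$ from the outset, invokes the $L_\pm$-involutivity criterion from~\cite{Gualtieri:rp}, and reduces everything to the clean identity $\IP{[a,b],c}=-T_D(a,b,c)$ on $L_+$; this makes the equivalence between involutivity and the torsion-type condition transparent and, unlike the paper, actually carries out the forward direction (your use of the integrability of both $\ker(\JJ-i1)$ and $\ker(\JJ_B-i1)$ to control the cross-brackets $[L_-,L_+]$ and $[\overline{L_-},L_+]$ is exactly right).  The paper's Nijenhuis computation is more self-contained and works uniformly over $E$, while your argument is more structural and makes the role of the splitting $E=C_+\oplus C_-$ explicit.
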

\begin{proof}
We leave the forward direction to the reader.  We show that if
$D\JJ=0$ (where, as usual, $(D_x\JJ)y = D_x(\JJ y) - \JJ(D_xy)$) and
the torsion is as above, then $\JJ$ is integrable as a generalized
complex structure. Note that under these assumptions, the
complementary generalized complex structure $\JJ' = G\JJ$ would also
be covariant constant, and be compatible with the torsion as well,
by Proposition~\ref{torscalc}. Therefore by the following argument
$\JJ'$ is also integrable, and we obtain the result.

We compute the Nijenhuis tensor of $\JJ$, for $x,y,z\in C^\infty(E)$
(in the following, $[\cdot,\cdot]$ refers to the skew-symmetrized
Courant bracket):
\begin{align*}
\IP{N_{\JJ}(x,y),z}&= \IP{[\JJ x,\JJ y] - \JJ[\JJ x,y] - \JJ[x,\JJ y] -[x,y],z}\\
&=\IP{D_z (\JJ x),\JJ y} -\IP{D_z(\JJ y),\JJ x}+\IP{D_{\JJ z} (\JJ
x),y} - \IP{D_{\JJ z} y, \JJ x}\\
&+\IP{D_{\JJ z} x,\JJ y} -\IP{D_{\JJ z}(\JJ y),x} -\IP{D_z x,y} +
\IP{D_z y,x}\\
&-T_D(\JJ x,\JJ y, z) - T_D(\JJ x, y, \JJ z) - T_D(x,\JJ y, \JJ z) -
T(x,y,z).
\end{align*}
The first eight terms cancel since $D_x(\JJ y) = \JJ D_x y$, and the
last four terms cancel since $T_D$ is of type $(2,1)+(1,2)$.
Therefore $\JJ$ is integrable, as claimed.
\end{proof}

We now explain that a solution to the system~\eqref{grpd}, if
positive in a certain sense, gives rise to a generalized K\"ahler
structure.  When the Poisson structure $Q$ vanishes, this result
specializes to the fact that a positive holomorphic line bundle with
Hermitian structure defines a K\"ahler structure.

\begin{defn}\label{positive}
Let $(I,J,Q,F)$ be a solution to the system~\eqref{grpd}, i.e. it
defines two global Courant trivializations compatible with a
generalized complex structure, separated by the 2-form $F$.  Then
\[
g = -\tfrac{1}{2}F(I+J)
\]
is a symmetric tensor, and if it is positive-definite, we say that
$F$ is \emph{positive}.
\end{defn}
If $F$ is positive, then $(g,I)$, $(g,J)$ are both Hermitian
structures.  Let $\omega_I=gI$, $\omega_J=gJ$ be their associated
2-forms.  Then we have the following.
\begin{theorem}\label{branekahler}
Let $(I,J,Q,F)$ be as above, and let $F$ be positive.  Then the pair
\begin{equation}\label{form}\begin{split}
\JJ_{B}=\frac{1}{2}\left(\begin{matrix}1&\\b&1\end{matrix}\right)
\left(\begin{matrix}J + I & -(\omega_J^{-1}-\omega_I^{-1}) \\
\omega_J-\omega_I&-(J^*+ I^*)\end{matrix}\right)
\left(\begin{matrix}1&\\-b&1\end{matrix}\right),\\
\JJ_{A}=\frac{1}{2}\left(\begin{matrix}1&\\b&1\end{matrix}\right)
\left(\begin{matrix}J - I & -(\omega_J^{-1}+\omega_I^{-1}) \\
\omega_J+\omega_I&-(J^*- I^*)\end{matrix}\right)
\left(\begin{matrix}1&\\-b&1\end{matrix}\right),\\
\end{split}\end{equation}
defines a generalized K\"ahler structure on the standard Courant
algebroid $(T\oplus T^*,[\cdot,\cdot]_0)$, for $b\in\Omega^2(M,\RR)$
given by
\[
b = -\tfrac{1}{2}F(J-I).
\]
\end{theorem}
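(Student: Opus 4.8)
The plan is to treat separately the two requirements of a generalized K\"ahler structure---that $\JJ_A,\JJ_B$ form a commuting orthogonal pair of (almost) generalized complex structures with $G(\cdot,\cdot)=\IP{\JJ_A\cdot,\JJ_B\cdot}$ positive-definite, and that each be integrable for the standard bracket $[\cdot,\cdot]_0$. I would first note that both structures in~\eqref{form} have the form $\JJ_A=e^{b}\,\mathbb{J}_A\,e^{-b}$ and $\JJ_B=e^{b}\,\mathbb{J}_B\,e^{-b}$, where $e^{b}=\left(\begin{smallmatrix}1&\\ b&1\end{smallmatrix}\right)$ is an orthogonal automorphism of $(T\oplus T^*,\IP{\cdot,\cdot})$ and $\mathbb{J}_A,\mathbb{J}_B$ are the middle matrices. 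Since $e^{b}$ is orthogonal, all pointwise (algebraic) claims may be checked on $\mathbb{J}_A,\mathbb{J}_B$, while the effect of the non-closed $b$ is only to twist the bracket, and this twisting is precisely what must be tracked for integrability.

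For the algebra I would extract from~\eqref{grpd} that, transposing $FJ+I^*F=0$, one also has $J^*F+FI=0$; these two relations make $g=-\tfrac12 F(I+J)$ symmetric and yield $I^*gI=g=J^*gJ$, so $(g,I)$ and $(g,J)$ are Hermitian with fundamental forms $\omega_I=gI$, $\omega_J=gJ$, and $g$ is positive-definite exactly when $F$ is positive. The matrices $\mathbb{J}_A,\mathbb{J}_B$ are then the standard normal form of a bi-Hermitian pair, so a direct computation (or the pointwise part of~\cite{Gualtieri:rp}) gives $\mathbb{J}_A^2=\mathbb{J}_B^2=-1$, $[\mathbb{J}_A,\mathbb{J}_B]=0$, orthogonality for $\IP{\cdot,\cdot}$, and positive-definiteness of $\IP{\mathbb{J}_A\cdot,\mathbb{J}_B\cdot}$. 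Conjugating by the orthogonal $e^{b}$ preserves each of these, so $(\JJ_A,\JJ_B)$ is a commuting orthogonal pair with $G$ a genuine generalized metric; in particular $\JJ_A$ is $G$-orthogonal and $G\JJ_A=\JJ_B$.

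For integrability I would apply the characterization established immediately above: since $\JJ_A$ is a $G$-orthogonal almost generalized complex structure, $(\JJ_A,G)$---equivalently the pair $(\JJ_A,\JJ_B)$---is generalized K\"ahler as soon as $D\JJ_A=0$ and $T_D$ is of type $(2,1)+(1,2)$, where $D$ is the generalized Bismut connection of Theorem~\ref{lclc}. Here I would identify the eigenbundles of $G$ as the graphs $C_\pm=\{X+(b\pm g)X\}$ and simplify using $b+g=-FJ$ and $b-g=FI$; under $\pi_\pm:C_\pm\to T$ the induced complex structures are $J$ on $C_+$ and $I$ on $C_-$. By the decomposition $D=D^+\oplus D^-$ of Theorem~\ref{lclc}, the condition $D\JJ_A=0$ is then equivalent to $\nabla^+J=0$ and $\nabla^-I=0$ for the Bismut connections $\nabla^\pm$ with skew torsion $\pm H$, where, working in the untwisted algebroid, $H=db$ is the $3$-form of the metric splitting and $T_D=\pi_+^*H+\pi_-^*H$ by Proposition~\ref{torscalc}. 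The required $(2,1)+(1,2)$ type of $T_D$ is automatic once the $\nabla^\pm$ are Hermitian, since the Bismut torsion of a Hermitian structure is of that type.

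The main obstacle, and the only genuinely analytic step, is therefore the pair of identities
\[
db=-d^c_J\omega_J=d^c_I\omega_I
\]
(with the standard sign conventions for $d^c$), asserting that the single $B$-field $b=-\tfrac12 F(J-I)$ simultaneously reproduces the canonical Bismut torsion of both $(g,J)$ and $(g,I)$. This is exactly where the closedness $dF=0$ of the two Courant trivializations enters: using $dF=0$ and the integrability of $I$ and $J$ (vanishing Nijenhuis tensors), I would expand $d\omega_I$, $d\omega_J$ and $db$ in terms of $F$, $I$, $J$ and the relations~\eqref{grpd}, and check that the $(2,1)+(1,2)$ parts agree while the $(3,0)+(0,3)$ parts vanish. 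Positivity of $F$ plays no role here---it served only to make $G$ positive-definite---so the real content is that closedness of $F$ is precisely the condition forcing the $e^{b}$-twist to carry the two integrable but non-commuting structures of~\eqref{twobrane} into an integrable \emph{commuting} pair for $[\cdot,\cdot]_0$. Once these identities are in hand, the preceding theorem delivers the generalized K\"ahler structure.
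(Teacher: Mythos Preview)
Your approach is sound but takes a different route from the paper, and it misses the paper's key simplification. The paper observes that, with the given $b$, the structure $\JJ_A$ collapses to the pure symplectic form
\[
\JJ_A=\begin{pmatrix}0&-F^{-1}\\ F&0\end{pmatrix},
\]
so its integrability is immediate from $dF=0$. For $\JJ_B$ the paper does not invoke the Bismut-connection theorem at all: it writes the $\JJ_A$-eigenspace decomposition of $L_B=\ker(\JJ_B-i)$ directly as $L_+=\{X+(b+g)X:X\in T^{1,0}_J\}$ and $L_-=\{X+(b-g)X:X\in T^{1,0}_I\}$, uses your own identities $b+g=-FJ$ and $b-g=FI$ to rewrite these as $L_+=\{X-iFX:X\in T^{1,0}_J\}$ and $L_-=\{X+iFX:X\in T^{1,0}_I\}$, and checks involutivity, which reduces to $i_Yi_X\,dF=0$ for $X,Y$ of the same holomorphic type. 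Since $dF=0$ this is automatic, and the equivalence from~\cite{Gualtieri:rp} (separate involutivity of $L_\pm$ is equivalent to integrability of the pair) finishes the argument.

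What your route buys is a conceptual link to the $D\JJ=0$ criterion just established; what it costs is the pair of torsion identities $db=-d^c_J\omega_J=d^c_I\omega_I$, which you correctly flag as the crux but do not actually carry out. These identities are true and can be extracted from $dF=0$ together with~\eqref{grpd}, but that computation is essentially the entire content of the bi-Hermitian integrability, and the paper's direct argument bypasses it completely: the only analytic input used, twice, is the single equation $dF=0$.
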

\begin{proof}
It is easily verified that $\JJ_A^2=\JJ_B^2=-1$ and that
$[\JJ_A,\JJ_B]=0$.  To show integrability, we first observe that
$\JJ_A$ has the form of a pure symplectic structure; indeed, with
the definitions above,
\[
\JJ_A = \begin{pmatrix}&-F^{-1}\\F&\end{pmatrix}.
\]
We see therefore that $\JJ_A$ is integrable since $dF=0$.

The structure $\JJ_B$ is also integrable, as follows.  Let $L_B =
\ker (\JJ_B-i)$ and let $L_B = L_+ \oplus L_-$ be its decomposition
into $\pm i$ eigenspaces for $\JJ_A$.  Then
\begin{align*}
L_+ &= \{X + (b+ g)X\ :\ X\in T^{1,0}_J\}\\
L_- &= \{X + (b- g)X\ :\ X\in T^{1,0}_I\}
\end{align*}
It follows from the definitions of $b,g$ that $b+g = -FJ$ whereas
$b-g = FI$.  As a result we have
\begin{align*}
L_+ = \{X -i FX \ :\ X\in T^{1,0}_J\}\\
L_- = \{X +i FX \ :\ X\in T^{1,0}_I\}
\end{align*}
which are integrable precisely when $i_Xi_Y dF=0$ for all $X,Y$ in
$T^{1,0}_I$ or $T^{1,0}_J$. Of course this holds since $F$ is
closed.
\end{proof}

We note that the converse of this argument also holds; using the
result from~\cite{Gualtieri:rp} that any generalized K\"ahler structure has
the form~\eqref{form}, we may show that any generalized K\"ahler
structure  $(\JJ_A,\JJ_B)$ with the property that $\JJ_A$ is
symplectic gives rise to a solution to the system~\eqref{grpd}. More
explicitly, given the bi-Hermitian data $(g,I,J)$ we determine $F$
via
\[
F = -2g(I+J)^{-1},
\]
where $(I+J)$ is invertible by the assumption on $\JJ_A$, and the
Poisson structure $Q$ is given by
\[
Q = (J-I)F^{-1} = \tfrac{1}{2}[I,J]g^{-1}.
\]
This is consistent with Hitchin's general
observation~\cite{MR2217300} that $[I,J]g^{-1}$ defines a
holomorphic Poisson structure for both $I$ and $J$, for any
generalized K\"ahler structure.

In fact, the interpretation of $F_{ij}$ in Proposition~\ref{diffint}
as defining a morphism between holomorphic Poisson structures allows
us to view the generalized K\"ahler structure as a morphism between
the holomorphic Poisson structures $(I,\sigma_I)$, $(J,\sigma_J)$.
This point of view is related to the approach in~\cite{MR2276362} to
defining a generalized K\"ahler potential, and may help to resolve
the problems encountered there at non-regular points.

Given the equivalence between certain generalized K\"ahler
structures and configurations of generalized complex submanifolds
shown in this section, we may apply it to produce new examples of
generalized K\"ahler structures, or indeed of configurations of
branes. We do this in the following section.
\section{Construction of generalized K\"ahler metrics}\label{construct}

Given a generalized complex submanifold, it is natural to construct
more by deformation; this is a familiar construction in symplectic
geometry, where new Lagrangian submanifolds may be produced by
applying Hamiltonian or symplectic diffeomorphisms.  Therefore we
would like to deform a given generalized complex submanifold by an
automorphism of the underlying geometry, as described in
Proposition~\ref{auto}.  If the automorphism used is positive in the
sense of Definition~\ref{positive}, then we will have constructed a
generalized K\"ahler structure, by Theorem~\ref{branekahler}.  This
construction is inspired by a construction of Joyce contained
in~\cite{MR1702248}, and its generalization by~\cite{MR2217300} to
the construction of generalized K\"ahler structures on Del Pezzo
surfaces.

To reiterate, the goal of the construction is as follows: given a
holomorphic Poisson structure $(I,\sigma_I)$ on $M$, with real and
imaginary parts $\sigma_I = P + iQ$, find a second complex structure
$J$ and a 2-form $F$ solving the system~\eqref{grpd}, i.e.
\begin{align}\label{grpd3}
\begin{cases} J-I = QF,\\
FJ + I^*F=0.
\end{cases}
\end{align}
We are particularly interested in the case where
$g=-\tfrac{1}{2}F(I+J)$ is positive-definite, as this then defines a
generalized K\"ahler structure, however the construction does not
depend on it.

In this construction, the complex structure $J$ will be obtained
from $I$ by flowing along a vector field; as a result, $J$ will be
biholomorphic to $I$.  Also, we shall describe the construction in
the case that $F$ is the curvature of a unitary connection, although
it will be clear that integrality of the form $F$ is not required.
\begin{itemize}
\item[1.]
We begin with a Hermitian complex line bundle $L$ over a compact
complex manifold $M$; the 2-form $F$ solving~\eqref{grpd3} will be
chosen from the cohomology class $c_1(L)$. We first assume that $L$
admits a holomorphic structure $\delbar_0$ with respect to the
``initial'' complex structure $I=I_0$.  The associated Chern
connection will be called $\nabla_0$, and its curvature denoted
$F_0$. Recall that $\nabla_0$ is the unique Hermitian connection on
$L$ such that $\nabla_0^{0,1} = \delbar_0$.

\item[2.]
We then assume that $L$ admits the structure of a holomorphic
Poisson module with respect to a holomorphic Poisson structure
$\sigma_I$ on $M$, which by Proposition~\ref{obstruction} occurs if
and only if $[\sigma_I F_0] \in H^1_I(T_{1,0})$ vanishes and the
secondary characteristic class in $H^2_{\sigma_I}(M)$ also vanishes.
By Proposition~\ref{herm}, we construct the Hermitian generalized
connection $D$ associated to this generalized holomorphic structure,
and decompose it according to the splitting $T\oplus T^*$:
\[
D = \nabla_0 + iX,
\]
where $X$ is a real $Q$-Poisson vector field such that $\delbar
X^{1,0} = \sigma_I F_0$, giving rise to the real equations
\begin{equation}\label{key}
\begin{cases}
L_X Q &= 0,\\
L_X I_0 &= QF_0.
\end{cases}
\end{equation}

\item[3.] Let $\varphi_t$ be the time-$t$ flow of the vector field
$X$.  Then we may transport $F_0$ by the flow, yielding the
cohomologous family of 2-forms $F_t = \varphi_{-t}^*F_0$, which
satisfies
\[
\dot F_t = L_X F_t = di_X F_t.
\]
We may also transport $I_0$ by the flow, obtaining a family $I_t =
I_0^{\varphi_t}$ satisfying
\[
\dot I_t = L_X I_t = QF_t,
\]
by Equation~\eqref{key}.  Note that $F_t$ is type $(1,1)$ with
respect to $I_t$. Also note that $F_t$ is the curvature of the
family of connections
\[
\nabla_t = \nabla_0 + \int_0^t i_X F_s ds,
\]
which are therefore the Chern connections associated to a family of
holomorphic structures $\delbar_t$ on $L$, each holomorphic with
respect to $I_t$.

\item[4.] We then compute the difference
\begin{align}
I_t - I_0 &= \int_0^t Q F_s ds\notag\\
&= tQ \tfrac{1}{t}\int_0^t F_s ds\label{solve}\\
&= tQ \overline F_t,\notag
\end{align}
where $\overline F_t$ is the curvature of the average Chern
connection on $L$:
\[
\overline\nabla_t = \tfrac{1}{t}\int_0^t \nabla_s ds.
\]
Setting $t=1$ we obtain a solution to the first part
of~\eqref{grpd3}:
\[
I_1 - I_0 = Q\overline F_1.
\]
\item[5.]
Observe that the second part of~\eqref{grpd3} is automatically
satisfied: from~\eqref{solve} we have $I_t - I_0 = QG_t$, where
\[
G_t = \int_0^t F_s ds.
\]
For $t=0$, the expression
\begin{equation}\label{van}
G_t I_t + I_0^*G_t
\end{equation}
vanishes, since $G_0=0$.  Taking the time derivative, we obtain
\begin{align*}
\dot G_t I_t + G_t \dot I_t + I_0^*\dot G_t &= F_tI_t +G_tQF_t +
I_0^*F_t\\
&=-(I_t^* - I_0^*)F_t + G_t QF_t\\
&=-G_tQF_t + G_tQF_t = 0.
\end{align*}
Therefore~\eqref{van} vanishes for all $t$; since
$\overline{F_t}=t^{-1}G_t$, we obtain the result.
\item[6.] \emph{Positivity:} If $F_0$ is positive, i.e. if the
original line bundle $L$ is positive, then $\overline F_t$ is
positive for sufficiently small $t$.  By Equation~\eqref{solve},
this gives a solution to the system~\eqref{grpd3} for the Poisson
structure $t\sigma_I$ replacing $\sigma_I$.
\end{itemize}

We summarize the main result of this construction in the following.
\begin{theorem}
Let $L$ be a positive holomorphic line bundle with Poisson module
structure over a compact complex manifold with holomorphic Poisson
structure $\sigma$. Let $(g_0,I_0)$ be the original K\"ahler
structure it determines.  Then the choice of Hermitian structure on
$L$ determines a canonical family of generalized K\"ahler structures
$\{(g_t,I_t,I_0)\ :\ -\epsilon<t<\epsilon\}$ such that the complex
structure $I_t$ coincides with $I_0$ only along the vanishing locus
of $\sigma$ for $t\neq 0$.
\end{theorem}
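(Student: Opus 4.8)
The plan is to realize the claimed family $(g_t,I_t,I_0)$ by feeding the one-parameter family of solutions $(I_0,I_t,tQ,\overline{F}_t)$ of the system~\eqref{grpd3}, produced by the preceding six-step construction, into Theorem~\ref{branekahler}. Everything then reduces to three things: checking that these data genuinely solve~\eqref{grpd3}, checking that $\overline{F}_t$ is positive for small $t$, and reading off the rigidity statement about where $I_t$ meets $I_0$.

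First I would fix a Hermitian metric on $L$ and let $\nabla_0$ be its Chern connection, with curvature $F_0\in c_1(L)$; positivity of $L$ means $F_0$ is a positive $(1,1)$-form, so $g_0=-F_0 I_0$ is the associated K\"ahler metric. Using the assumed Poisson module structure and Proposition~\ref{herm}, I extend $\delbar_0$ to the unitary generalized connection $D=\nabla_0+iX$, and the content of Step~2 is to extract from $D$ the real vector field $X$ together with the equations~\eqref{key}: that $X$ is $Q$-Poisson ($L_X Q=0$) follows from the Corollary established earlier showing the vector field of any Hermitian generalized holomorphic line bundle is Poisson, while $L_X I_0 = Q F_0$ comes from the compatibility condition $\delbar X^{1,0}=\sigma_I F_0$ characterizing the Poisson module structure. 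I regard this translation of the abstract module structure into the concrete flow equations~\eqref{key} as the genuine input of the theorem.

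Next I would flow: letting $\varphi_t$ be the flow of $X$, set $I_t=I_0^{\varphi_t}$ and $F_t=\varphi_{-t}^*F_0$, and differentiate to obtain $\dot I_t=QF_t$ and $\dot F_t=d\,i_X F_t$. Integrating gives $I_t-I_0=\int_0^t QF_s\,ds=(tQ)\overline{F}_t$ with $\overline{F}_t=t^{-1}\int_0^t F_s\,ds$, which is exactly the first equation of~\eqref{grpd3} for the rescaled Poisson structure $t\sigma$ and $F=\overline{F}_t$. The second equation $\overline{F}_t I_t+I_0^*\overline{F}_t=0$ is equivalent, after clearing $t^{-1}$, to the vanishing of $G_t I_t+I_0^* G_t$ where $G_t=\int_0^t F_s\,ds$; I would prove this by the ODE argument of Step~5, noting that the expression vanishes at $t=0$ and that its $t$-derivative vanishes identically once one substitutes $\dot G_t=F_t$, $\dot I_t=QF_t$, the $(1,1)$-relation $F_t I_t=-I_t^* F_t$, and $I_t^*-I_0^*=G_t Q$. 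This differential-equation bookkeeping, showing the second constraint is automatically preserved along the flow, is the main subtlety of the argument.

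Finally I would settle positivity and the rigidity claim. Since $\overline{F}_0=F_0$ is positive and $M$ is compact, positive-definiteness of $g_t=-\tfrac12\overline{F}_t(I_0+I_t)$ is an open condition that persists uniformly for $|t|<\epsilon$; then Theorem~\ref{branekahler} converts each positive solution $(I_0,I_t,tQ,\overline{F}_t)$ into a generalized K\"ahler structure $(g_t,I_t,I_0)$, and these depend canonically only on the chosen Hermitian metric. For the last assertion, invertibility of $\overline{F}_t$ for small $t$ forces $I_t-I_0=(tQ)\overline{F}_t$ to vanish exactly where $Q$ vanishes; since $\sigma=P+iQ$ with $P=IQ$, the zero locus of $Q$ coincides with that of $\sigma$, so for $t\neq0$ the complex structures $I_t$ and $I_0$ agree precisely along the vanishing locus of $\sigma$.
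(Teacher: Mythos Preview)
Your proposal is correct and follows essentially the same route as the paper: the theorem is stated there as a summary of the six-step flow construction, and you have faithfully recapitulated those steps—extracting the vector field $X$ from the Hermitian generalized connection, flowing to produce $(I_t,F_t)$, verifying the second equation of~\eqref{grpd3} by the ODE argument, invoking compactness for positivity of $\overline F_t$ at small $t$, and reading off the coincidence locus from $I_t-I_0=(tQ)\overline F_t$ together with the invertibility of $\overline F_t$. The only minor addition you make explicit, and which the paper leaves implicit, is the observation that the zero loci of $Q$ and $\sigma$ agree since $P=IQ$.
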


\begin{example}
One case where the existence of a positive Poisson module is
guaranteed is in the case of a Fano Poisson manifold, since the
anticanonical bundle, which always admits a Poisson module
structure, is positive.  This extends the result of~\cite{MR2371181},
who showed that all smooth Fano surfaces (the Del Pezzo surfaces)
admit the families of generalized K\"ahler structures described
here.
\end{example}

We remark finally upon the relation of our construction to Hitchin's
result for Del Pezzo surfaces~\cite{MR2371181}.  To obtain the family of
generalized K\"ahler structures, he used a flow generated by a
Poisson vector field $X$ which he expressed as the Hamiltonian
vector field of $\log |s|^2$, for $s$ a holomorphic section of the
anticanonical bundle vanishing at the zero locus of the Poisson
structure. From our point of view, he was making use of the
generalized Poincar\'e-Lelong formula of Proposition~\eqref{PL},
since in the 2-dimensional case there is always a non-trivial
generalized holomorphic section of the anti-canonical bundle of a
Poisson surface, namely the Poisson structure itself.  However, in
higher dimension, there is a dearth of global generalized
holomorphic sections; indeed by Proposition~\eqref{PL}, such a
section (if generically nonzero) must vanish only along the zero
locus of $\sigma$, which has codimension greater than one in
general.
\section{Relation to non-commutative algebraic geometry}

Since the observation in~\cite{Gualtieri:rp} that the deformation space of a
complex manifold as a generalized complex manifold includes the
``noncommutative'' directions in $H^0(\wedge^2T_{1,0})$, it was
hoped that there might be a more precise relationship between
generalized complex structures and noncommutativity.  The presence
of an underlying Poisson structure, for example, lends credence to
this idea. In the realm of generalized K\"ahler 4-manifolds, we have
even more evidence in this direction, since, as observed originally
in~\cite{MR1702248}, the locus where the bi-Hermitian complex
structures $(I_+,I_-)$ coincide is an anti-canonical divisor for
both structures.

If smooth, each connected component of this coincidence locus is an
elliptic curve $C$, and we may view it as embedded in two different
complex manifolds $X_\pm = (M,I_\pm)$.
\begin{equation*}
\xymatrix{X_-& & X_+\\ &C\ar[lu]\ar[ru] & }
\end{equation*}
In the examples constructed in Section~\ref{construct} and
by~\cite{MR2371181}, $X_\pm$ have natural holomorphic line bundles
sitting over them, which we called $L_0,L_1$.  Pulling them back to
$C$, we obtain holomorphic line bundles
$\mathcal{L}_0,\mathcal{L}_1$ over $C$.  Furthermore, since the flow
satisfied $L_XI_t = QF_t$, we know that the flow restricts to a
holomorphic flow on $C$, the vanishing locus of $Q$.   As a result,
$\mathcal{L}_0,\mathcal{L}_1$ are related by an automorphism of $C$.
This data $(C,\mathcal{L}_0,\mathcal{L}_1)$ is precisely what is
used in the approach of~\cite{Bondal} to the classification of
$\ZZ$-algebras describing noncommutative projective surfaces.

In fact, in our construction we produce an example of an
automorphism $(\varphi,F)=(\varphi_1,\overline{F}_1)$ in the sense
of Proposition~\ref{auto}. Therefore we may apply it successively,
producing an infinite family of generalized complex submanifolds
with induced complex structures $\{I_k = I_0^{\varphi_1^k}\}$, each
$I_k$ separated from $I_0$ by the line bundle $L^k$ with connection
$\overline\nabla_k$, and all coinciding on the vanishing locus $C$
of $Q$. As a result we obtain an infinite family of embeddings
\begin{equation*}
\xymatrix{\cdots\ar[r]&(M,I_0)\ar[r]^-{F} &
(M,I_1)\ar[r]^-{F^\varphi}&\cdots\ar[r]^-{F^{\varphi^k}}&
(M,I_{k+1})\ar[r]&\cdots\\ & C\ar[ul]\ar[u]\ar[ur]\ar[urrr]}
\end{equation*}
Where the arrows on the top row indicate morphisms in the sense of
the groupoid of Definition~\ref{groupoid}.  This may provide an
alternative interpretation of Van den Bergh's construction of the
twisted homogeneous coordinate ring (see~\cite{VDB}): let
$\mathcal{L} = L_0|_C$, and let $\LL^\varphi=(\varphi^{-1})^*\LL$.
Then define the vector spaces
\[
\Hom(i,j) =
H^0(C,\LL^{\varphi^i}\otimes\LL^{\varphi^{i+1}}\otimes\cdots\otimes\LL^{\varphi^{j-1}})
\]
and define a $\ZZ^{>0}$-graded algebra structure on
\begin{equation}\label{vdb}
A^\bullet = \bigoplus_{k\geq 0} \Hom(0,k),
\end{equation}
via the multiplication, for $a\in A^p$ and $b\in A^q$:
\[
a\cdot b = a\otimes b^{\varphi^p},
\]
where we use the natural map $b\mapsto b^{\varphi^p}$ taking
$\Hom(0,q)\lra \Hom(p,p+q)$, and the tensor product is viewed as a
composition of morphisms.

Of course this is nothing but a recasting of the Van den Bergh
construction; there is a sense in which it captures only certain
morphisms between the generalized complex submanifolds, namely those
which are visible upon restriction to $C$.  Though rare, there are
sometimes generalized holomorphic sections of the bundles $L^k$
supported over all of $M$.  In some sense, these sections must be
included in the morphism spaces as well.

For instance, performing our construction for $L=\OO(1)$ over $\CC
P^2$, equipped with a holomorphic Poisson structure $\sigma\in
H^0(\CC P^2, \OO(3))$ with smooth zero locus $\iota:C\hookrightarrow
\CC P^2$, the graded algebra~\eqref{vdb} has linear growth instead
of the quadratic growth needed to capture a full non-commutative
deformation of the coordinate ring of $\CC P^2$ (these are the
Sklyanin algebras, classified by~\cite{ATV}).  It fails to include
an additional generator in degree $3$, as can be seen from the fact
that the restriction map $H^0(\CC P^2,\OO(3))\lra
H^0(C,\iota^*\OO(3))$ has 1-dimensional kernel. However it is
important to note that neither $\OO(1)$ nor $\OO(2)$ has generalized
holomorphic sections over $\CC P^2$, while $\OO(3)$ has a
1-dimensional space of them.  We end with this vague indication that
the morphisms supported on $C$ should be combined with those
supported on the whole holomorphic Poisson manifold.

\bibliographystyle{utphysmod}
\bibliography{GKG}

\end{document}